\documentclass[10pt]{article}
\usepackage{amssymb}
\usepackage{indentfirst}
\usepackage{tikz}
\usepackage{amsmath,amsthm,amsfonts}
\newtheorem{Theorem}{Theorem}[section]
\newtheorem{Remark}[Theorem]{Remark}

\newtheorem{Lemma}[Theorem]{Lemma}

\newtheorem{Example}[Theorem]{Example}

\newcommand{\K}{\mathbb{K}}

\newcommand{\Z}{\mathbb{Z}}
\newcommand{\Q}{\mathbb{Q}}
\newcommand{\R}{\mathbb{R}}
\newcommand{\Zer}{\mathrm{Zer}}
\newcommand{\Res}{\mathrm{Res}}
\newcommand{\ord}{\mathrm{\mathrm{ord}}}
\newcommand{\trunc}{\mathrm{\mathrm{trunc}}}
\newcommand{\cont}{\mathrm{cont}}
\begin{document}

\title{On Abhyankar's irreducibility criterion for quasi-ordinary polynomials}
\footnotetext{
      \begin{minipage}[t]{4.2in}{\small
       2000 {\it Mathematics Subject Classification:\/} Primary 32S55;
       Secondary 14H20.\\
       Key words and phrases: quasi-ordinary polynomial, irreducibility}
       \end{minipage}}
 \author{Janusz Gwo\'zdziewicz and Beata Hejmej}
\maketitle

\begin{abstract}
Let  $f$ and $g$ be Weierstrass polynomials with coefficients 
in the ring of formal power series over an algebraically closed field of characteristic zero. 
Assume that  $f$ is irreducible and quasi-ordinary. 
We show that if degree of $g$ is small enough and all monomials appearing 
in the resultant of $f$ and~$g$ have orders big enough,
then $g$ is irreducible and quasi-ordinary, generalizing Abhyankar's irreducibility criterion 
for plane analytic curves. 
\end{abstract}

\section{Introduction}
\label{intro}
The paper is organized as follows. In the current section we introduce necessary notation 
and state our main result (Theorem~\ref{Th:irred1}). 
Its proof is in Section~3.  
Then, in Section~4, we introduce the notion of the logarithmic contact of irreducible Weierstrass polynomials and in 
Theorem~\ref{Th:irred3} rewrite the main results of the paper in terms of the logarithmic contact.
At the end of the paper we show that Abhyankar-Moh irreducibility criterion follows from Theorem~\ref{Th:irred1}.

Throughout the paper $\K$ is an algebraically closed  field of characteristic zero.
We use the notation $\K[[X]]$ for the ring $\K[[X_1,\dots,X_d]]$ of formal power series 
in $d$ variables with coefficients in $\K$ 
and the notation $\K[[X^{1/n}]]$ for the ring $\K[[X_1^{1/n},\dots,X_d^{1/n}]]$.
In one variable case the elements of this ring are called {\em Puiseux series}.
We will use a multi-index notation $X^q:=X_1^{q_1}\cdots X_d^{q_d}$ for  $q=(q_1,\dots,q_d)$. 

Let $f=Y^n+a_{n-1}(X)Y^{n-1}+\cdots+a_0(X)\in\K[[X]][Y]$ be a unitary polynomial.
Such a  polynomial is called \textit{quasi-ordinary} if its discriminant equals $u(X)X^q$ with $u(0)\neq0$. We call $f$ \textit{a Weierstrass polynomial} if $a_i(0)=0$ for all $i=0,\dots, n-1$. 

The classical Abhyankar-Jung theorem (see \cite{Parusinski-Rond})
states that every quasi-ordinary polynomial $f\in\K[[X]][Y]$ has its roots in $\K[[X^{1/m}]]$ 
for some positive integer $m$.
Hence one can factorize $f$ to the product $\prod_{i=1}^n(Y-\alpha_i)$, 
where $\alpha_i\in \K[[X^{1/m}]]$. We put $\Zer f=\{\alpha_1,\dots,\alpha_n\}$. 
Since the discriminant of a monic polynomial is a product of differences of its roots, 
we have $\alpha_i - \alpha_j=u_{ij}(X)X^{\lambda_{ij}}$ with $u_{ij}(0)\neq 0$. 
The $d$-tuple $d(\alpha_i,\alpha_j):=\lambda_{ij}$ of non-negative rational numbers 
will be called the \textit{contact between $\alpha_i$ and $\alpha_j$}.

For irreducible $f$ the contacts $d(\alpha,\alpha')$ for $\alpha,\alpha'\in\Zer f$, $\alpha\neq\alpha'$, 
are called the \textit{characteristic exponents} of~$f$. 

Let us introduce a partial order in the set $\Q_{\geq0}^d$:
$q\leq q'$ if and only if $q'-q\in \Q_{\geq0}^d$. 
Then the characteristic exponents
can be set to the increasing sequence $(h_1,\dots,h_s)$ (see \cite[Lemma~5.6]{Lipman}).
We call this sequence the \textit{characteristic} of $f$ and denote it by ${\rm Char}(f)$.

With the sequence of characteristic exponents we associate the increasing sequence of lattices 
$M_0\subset M_1\subset \dots\subset M_s$ defined as follows:
$M_0=\Z^d$ and
$M_i=\Z^d+\Z h_1+\cdots +\Z h_i$ for $i=1,\dots,s$.
We set $n_i=[M_i:M_{i-1}]$ for $i=1,\dots,s$, $n_{s+1}=1$ and 
$e_i=n_{i+1}\cdots n_{s+1}$ for $i=0,\dots,s$. 
Then $\deg f=n_1\cdots n_s$ (see \cite[Remark~2.7]{GP}). 
Finally we set 
\begin{equation}\label{Eq1}
q_{i}=\sum_{j=1}^{i} (e_{j-1}-e_j)h_j +e_{i} h_i
\end{equation}
for $i=1,\dots,s$. 

If $f,g\in \K[[X]][Y]$, then the resultant of this polynomials is denoted by $\Res(f,g)$.

\medskip
We can now formulate our main result.

\begin{Theorem}\label{Th:irred1}
Let $f\in \K[[X]][Y]$ be a quasi-ordinary irreducible polynomial of characteristic $(h_1,\dots,h_s)$ 
and let $g\in \K[[X]][Y]$ be a Weierstrass polynomial of degree $\leq n_1\cdots n_k$, 
where $1\leq k\leq s$.

If all monomials appearing in $\Res(f,g)$ have exponents greater than $(\deg g)q_k$ then
\begin{itemize}
\item[{\rm (i)}] $g$ is  irreducible and quasi-ordinary of degree $n_1\cdots n_k$ 
                        and characteristic $(h_1,\dots, h_k)$;
\item[{\rm (ii)}] for every $\gamma\in {\rm Zer}\, g$ there exists $\alpha\in {\rm Zer}\, f$ 
such that $\gamma-\alpha=\sum_{h>h_k}c_{h}X^{h}$.
\end{itemize} 

Moreover, if $X^{(\deg g)q_{k+1}}$ divides  $\Res(f,g)$ then
\begin{itemize}
\item[{\rm (iii)}] $\Res(f,g) = u(X)\, X^{(\deg g)q_{k+1}}$, where $u(0)\neq0$;
\item[{\rm (iv)}] for every $\gamma\in {\rm Zer}\, g$ there exists $\alpha \in {\rm Zer}\, f$ 
such that $\gamma-\alpha=c_{h_{k+1}}X^{h_{k+1}}+\sum_{h>h_{k+1}}c_{h}X^{h}$.
\end{itemize} 
\end{Theorem}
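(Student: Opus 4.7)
The plan is to work in the Puiseux ring $\K[[X^{1/m}]]$ for a suitable $m$ and to use the product formula
\[
\Res(f,g) \;=\; \prod_{\alpha\in\Zer f} g(\alpha) \;=\; \prod_{\alpha\in\Zer f,\,\gamma\in\Zer g}(\alpha-\gamma),
\]
valid since $f$ is monic; the factorisation of $g$ in the algebraic closure has to be produced along the way, for instance by applying Newton--Puiseux to $g$. A first step is a symmetry reduction: the action of the group of $m$-th roots of unity on $\K[[X^{1/m}]]$ permutes $\Zer f$ transitively (by irreducibility of $f$) and preserves supports of Puiseux series, so $g(\alpha)$ has the same support for every $\alpha\in\Zer f$. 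A diagonal argument (every minimal exponent $v$ in the support of $g(\alpha)$ contributes a non-vanishing coefficient at $nv$ in the product) then translates ``every monomial of $\Res(f,g)$ has exponent $>(\deg g)q_k$'' into ``every monomial of $g(\alpha)$ has exponent $>(\deg g)q_k/n$'', where $n=\deg f$.

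For a fixed $\alpha\in\Zer f$, I would perform a Newton-polyhedron analysis of the shifted polynomial $g(\alpha+Y)\in\K[[X^{1/m}]][Y]$, whose roots in $Y$ are the differences $\gamma-\alpha$. The support bound on the constant term $g(\alpha)$, combined with the structure of the higher $Y$-coefficients, forces the polyhedron to have a face yielding roots $Y_0$ supported above $h_k$. Using the tree structure of $\Zer f$ --- the $e_k$ roots in the $k$-th neighbourhood of any chosen $\alpha$ are exactly those at contact $>h_k$ from it --- one sees that any $\gamma\in\Zer g$ close to one such $\alpha$ beyond $h_k$ is automatically close to the whole neighbourhood, and a counting of close pairs under the bound $\deg g\le n_1\cdots n_k$ yields $\deg g = n_1\cdots n_k$ together with conclusion~(ii).

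Once (ii) is in hand, the map sending $\gamma\in\Zer g$ to its close $k$-th neighbourhood is a bijection onto the $n_1\cdots n_k$ neighbourhoods of $\Zer f$. For distinct $\gamma,\gamma'\in\Zer g$ with close partners $\alpha,\alpha'$ in different neighbourhoods, $d(\alpha,\alpha')=h_i$ for some $i\le k$, and the identity $\gamma-\gamma'=(\gamma-\alpha)-(\gamma'-\alpha')+(\alpha-\alpha')$ gives $d(\gamma,\gamma')=h_i$: hence $g$ is quasi-ordinary of characteristic $(h_1,\dots,h_k)$, and irreducibility follows since the root-of-unity action permutes $\Zer g$ transitively. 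I expect the main obstacle to lie in the passage from ``every monomial of $g(\alpha)$ exceeds $(\deg g)q_k/n$'' to ``every single $\gamma\in\Zer g$ is close to some $\alpha$ beyond $h_k$''. A naive average over $\gamma$ of $\sum_\alpha d(\alpha,\gamma)$ shows only that \emph{some} $\gamma$ is close, so promoting this to all $\gamma$ likely requires the full ``all monomials'' information on $\Res(f,g)$, together with the multivariate Newton-polyhedron structure or an induction on $\deg g$.

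For (iii) and (iv) the divisibility $X^{(\deg g)q_{k+1}}\mid\Res(f,g)$ tightens the polyhedron analysis to pin the leading term of each $\gamma-\alpha$ (for $\alpha$ the close partner of $\gamma$) to $c_{h_{k+1}}X^{h_{k+1}}$. A direct computation then gives $\sum_{\alpha,\gamma}d(\alpha,\gamma)=(\deg g)q_{k+1}$ exactly, and since $f$ and $g$ are both quasi-ordinary with compatible characteristics, each factor $\alpha-\gamma$ is a monomial times a unit in $\K[[X^{1/m}]]$, so the product is of the form $u(X)X^{(\deg g)q_{k+1}}$ with $u(0)\ne 0$.
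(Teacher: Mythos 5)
The overall shape of your plan matches the paper's: find a close pair between a root of $g$ and a root of $f$, use Galois symmetry to spread closeness around, and read off the characteristic and irreducibility. But there are two genuine gaps, one of which you half-acknowledge and one of which you do not.

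The more fundamental problem is foundational. You want to write $\Res(f,g)=\prod_{\alpha,\gamma}(\alpha-\gamma)$ with $\gamma\in\Zer g$ lying in $\K[[X^{1/m}]]$, and then take Newton polytopes of $g(\alpha+Y)$ and of the differences $\gamma-\alpha$. In the one-variable case Newton--Puiseux guarantees this, but here $d\ge 1$ and the roots of $g$ a priori live only in the algebraic closure of the fraction field of $\K[[X]]$, \emph{not} in $\K[[X^{1/m}]]$. That inclusion holds precisely when $g$ is quasi-ordinary (Abhyankar--Jung), which is part (i) of the conclusion. So the Newton-polyhedron analysis of $g(\alpha+Y)$ and of $\gamma-\alpha$ cannot be set up directly, and the step ``$d(\gamma,\gamma')=h_i$'' that you use to deduce quasi-ordinariness is circular: it already presupposes that $\gamma-\gamma'$ is a monomial times a unit. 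The paper avoids this by performing a generic monomial substitution $X=T^c$, which collapses to a genuine one-variable Puiseux setting where the product formula and Newton polygons are available, proves the analogue of your close-pair statement there, and then lifts back via genericity (Lemmas on $\Delta(\gamma^{[c]})$ determining $\Delta(\gamma)$ and on $\Delta(f^{[c]})=L_c(\Delta(f))$). Quasi-ordinariness of $g$ is then deduced by matching the Newton polytopes of the discriminants, not assumed.

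The second gap you do name: passing from ``some $\gamma\in\Zer g$ is close to some $\alpha\in\Zer f$ beyond $h_k$'' to ``for every $\beta\in\Zer f_{k+1}$ there is exactly one such $\gamma$.'' You correctly observe that an averaging argument does not suffice and propose vague remedies (``induction on $\deg g$'' or ``multivariate Newton-polyhedron structure''). The paper's solution is a precise symmetry statement (its Theorem~\ref{T21}): for generic $c$, the contact of $\Zer g^{[c]}$ with $\beta$ is the \emph{same} for all $\beta\in\Zer f^{[c]}$, because the shifted polynomials $g(Y+\alpha_i)$ are Galois conjugate, hence have equal Newton polytopes, hence after the generic substitution have equal Newton polygons, hence the multisets of root orders agree. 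Combined with the degree bound $\deg g\le n_1\cdots n_k=\deg f_{k+1}$ and a pigeonhole count, this gives exactly one close $\gamma$ per $\beta$. Without this ingredient your outline cannot reach (i), (ii), and the rest collapses. Also note that your ``diagonal argument'' on vertices of $\Delta(g(\alpha))$ is fine as far as it goes (vertices of a Minkowski sum carry nonzero coefficients), but you never use the conclusion it produces for anything in the subsequent steps, which instead rely on the unestablished closeness.

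In short: the high-level intuition is right, but you need the generic one-variable reduction to make the Puiseux manipulations legitimate, and you need the Galois-equality-of-contacts lemma to close the gap you flagged; both are absent from the proposal.
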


\begin{Remark}
{\rm In the point (iv) of Theorem~\ref{Th:irred1},
the monomial $X^{h_{k+1}}$ does not appear in the power series   $\gamma$.}
\end{Remark}

\begin{Example}
{\rm Let  
$f=Y^4-2X_1^3X_2^2Y^2-4X_1^5X_2^4Y-X_1^7X_2^6+X_1^6X_2^4$. 
The polynomial $f$ is quasi-ordinary and irreducible in $\mathbb{C}[[X_1,X_2]][Y]$  
with the  roots
\begin{eqnarray*}
\alpha_1&=&X_1^{3/2}X_2+X_1^{7/4}X_2^{3/2} \\
\alpha_2&=&X_1^{3/2}X_2-X_1^{7/4}X_2^{3/2} \\
\alpha_3&=&-X_1^{3/2}X_2+\sqrt{-1}X_1^{7/4}X_2^{3/2} \\
\alpha_4&=&-X_1^{3/2}X_2-\sqrt{-1}X_1^{7/4}X_2^{3/2}
\end{eqnarray*}
and  characteristic exponents $h_1=(\frac32,1)$ and $h_2=(\frac74,\frac32)$.

Let $g=(Y^2-X_1^3X_2^2)^2-4X_1^5X_2^4Y$.
Then $\Res(f,g)=X_1^{28}X_2^{24}$.
We have $X^{(\deg g)q_2}=X_1^{26}X_2^{20}$, 
so according to Theorem~\ref{Th:irred1}, 
the polynomial $g$ is irreducible and quasi-ordinary of characteristic $(h_1,h_2)$.}
\end{Example}

\section{Auxiliary results}
For $g=\sum_{a}c_a X^a \in\K[[X^{1/m}]]$
we define the {\it Newton polytope} $\Delta(g)$ as the convex hull 
of the set $\bigcup_{c_a\ne0}(a+\R_{\geq0}^d)$.
The Newton polytope $\Delta(f)$ 
of a polynomial $f\in\K[[X^{1/m}]][Y]$ is the Newton polytope of $f$ treated 
as an element of the ring $\K[[X_1^{1/m},\dots,X_d^{1/m},Y^{1/m}]]$.
In two variable case Newton polytopes are called {\em Newton polygons}.

Let $T$ be a single variable. The order of a fractional power series 
$\gamma\in \K[[T^{1/m}]]$ will be  denoted ${\rm ord}\, \gamma$. Note that for all $\alpha,\beta,\gamma\in \mathbb{K}[[T^{1/m}]]$ we have ${\rm ord}(\alpha-\beta)\geq \min\{{\rm ord}(\alpha-\gamma),{\rm ord}(\gamma-\beta)\}$. We call this property the {\it strong triangle inequality}.

\begin{Lemma}\label{L:comp}
Let $g$, $\tilde g\in\K[[T^{1/m}]] [Y]$ be Weierstrass polynomials 
such that $\Delta(g)=\Delta(\tilde g)$.  
Then $\{ \ord \gamma: \gamma \in \Zer g\}=\{ \ord \gamma: \gamma \in \Zer \tilde g\}$.
\end{Lemma}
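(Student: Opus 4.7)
The plan is to show that the set $\{\ord \gamma : \gamma \in \Zer g\}$ is completely determined by the Newton polytope $\Delta(g)$; once this is established, the hypothesis $\Delta(g) = \Delta(\tilde g)$ gives the lemma immediately.

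First I would factor $g = Y^{n_0}\prod_{i=1}^{N}(Y-\gamma_i)$ with $\gamma_i \in \K[[T^{1/m'}]]\setminus\{0\}$ for some $m'$, using the one-variable Newton--Puiseux theorem (a special case of Abhyankar--Jung). I then group the nonzero roots by their orders: let $\rho_1 < \cdots < \rho_r$ be the distinct values of $\ord \gamma_i$, with respective multiplicities $\nu_1,\dots,\nu_r$, and set $k_j := \nu_1 + \cdots + \nu_j$.

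Next I would read the compact part of $\partial \Delta(g)$ directly from this factorization. The coefficients of $g$ in $Y$ are, up to sign and the common factor $Y^{n_0}$, the elementary symmetric functions $\sigma_k(\gamma_1,\dots,\gamma_N)$. The key calculation is that $\ord \sigma_{k_j} = \nu_1\rho_1 + \cdots + \nu_j\rho_j$, with equality and not just the trivial inequality $\geq$, because the subset of $\{\gamma_1,\dots,\gamma_N\}$ of size $k_j$ achieving this minimum total order is \emph{unique} --- it consists precisely of those $\gamma_i$ with $\ord \gamma_i \leq \rho_j$ --- so the product of the corresponding leading coefficients cannot be cancelled by any other monomial of $\sigma_{k_j}$. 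For an intermediate index $k_{j-1} < k < k_j$, the point $(\ord \sigma_k, N-k)$ lies weakly above the segment joining $(\nu_1\rho_1+\cdots+\nu_{j-1}\rho_{j-1}, N-k_{j-1})$ and $(\nu_1\rho_1+\cdots+\nu_j\rho_j, N-k_j)$.

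It follows that $\Delta(g)$ has exactly $r$ compact edges, whose slopes recover the distinct orders $\rho_1,\dots,\rho_r$, while $n_0$ appears as the minimum $Y$-coordinate attained on $\Delta(g)$. Both pieces of data are invariants of $\Delta(g)$, so they agree for $g$ and $\tilde g$, yielding the claimed equality of order sets. The only nontrivial point is the non-cancellation argument for $\sigma_{k_j}$ above, but it is forced by the uniqueness of the minimizing subset.
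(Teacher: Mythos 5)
Your proof is correct and is essentially the paper's argument unwound: the paper simply cites that the Newton polygon of $g=\prod_i(Y-\gamma_i)$ is the Minkowski sum of the (segment-shaped) polygons of the linear factors, and your elementary-symmetric-function computation with the uniqueness of the minimizing subset is precisely the explicit verification of that Minkowski-sum identity, from which the bijection between compact edges (slope and $Y$-length) and the multiset $\{\ord\gamma_i\}$ is read off. The one cosmetic slip is that the lattice point associated to $\sigma_k$ should be $(\ord\sigma_k,\,N+n_0-k)$ rather than $(\ord\sigma_k,\,N-k)$, but this constant vertical shift does not affect slopes or the conclusion.
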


\begin{proof}
The Newton polygon of the product $g=\prod_{i=1}^{\deg g}(Y-\gamma_i(T))$ 
is the Min\-kow\-ski sum of the Newton polygons of its factors and the shape of the 
Newton polygon of each factor $Y-\gamma_i(T)$ determines the order of $\gamma_i(T)$.

For a more detailed proof see  \cite[Theorem~2.1]{Ploski1}.
\end{proof}

Let  $\Q_+$ be the set of positive rational numbers. 
For a Newton polytope  $\Delta\subset \R_{\geq 0}^d$ 
and  $c\in\Q_+^d$  we define the {\em face} $\Delta^c:=\{v\in\Delta:\langle c,v\rangle=\min_{w\in\Delta}\langle c,w\rangle\}$.

We will say that a condition depending on $c\in\Q_{+}^d$  is satisfied for generic 
$c$ if it holds in an open and dense subset of $\Q_{+}^d$.

\begin{Lemma}\label{L:polygon}
Let $\Delta$ be the Newton polytope of some nonzero fractional power 
series~$\gamma\in \K[[X^{1/m}]]$.
Then for generic $c\in \Q_{+}^d$ a face $\Delta^c$ is a vertex of $\Delta$.
\end{Lemma}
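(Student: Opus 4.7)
The plan is to show that $\Delta$ has only finitely many vertices, and then separate them using a linear functional with rational positive coordinates.

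First I would argue that the vertices of $\Delta$ are exactly the minimal elements of $\mathrm{supp}(\gamma)$ in the coordinate-wise partial order. Any vertex $v$ of $\Delta$ must lie in $\mathrm{supp}(\gamma)$: by extremality $v = a + r$ for some single $a \in \mathrm{supp}(\gamma)$ and $r \in \R_{\geq 0}^d$, and if $r \neq 0$ then $v$ is the midpoint of $v \pm \varepsilon e_j$ (for $r_j > 0$ and small $\varepsilon$), both lying in $a + \R_{\geq 0}^d \subset \Delta$, a contradiction. Similarly, if $a \in \mathrm{supp}(\gamma)$ satisfies $a \leq v$ with $a \neq v$, the identity $v = \tfrac12 a + \tfrac12\bigl(v + (v-a)\bigr)$ exhibits $v$ as a non-trivial convex combination of two distinct points of $a + \R_{\geq 0}^d \subset \Delta$, again contradicting extremality. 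Since $\mathrm{supp}(\gamma) \subset (\tfrac{1}{m}\N)^d$ is order-isomorphic to a subset of $\N^d$, Dickson's lemma bounds the antichain of minimal elements, so the vertex set of $\Delta$ is finite; enumerate it as $\{v_1,\dots,v_N\}$.

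Next, for any $c \in \Q_+^d$, strict positivity of $c$ combined with the fact that the recession cone of $\Delta$ equals $\R_{\geq 0}^d$ ensures that $\langle c, \cdot\rangle$ attains a finite minimum on $\Delta$, necessarily equal to $\min_i \langle c, v_i\rangle$. Consequently $\Delta^c$ is the convex hull of those $v_i$ realizing this minimum, which collapses to a single vertex precisely when the values $\langle c, v_i\rangle$ are pairwise distinct. For each pair $i \neq j$, the equation $\langle c, v_i - v_j\rangle = 0$ cuts out either a proper rational affine hyperplane in $\Q_+^d$ or the empty set (when $v_i - v_j$ has all nonzero components of the same sign), in either case a closed nowhere-dense subset. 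The complement of the finite union of these bad sets is open and dense in $\Q_+^d$, and on it $\Delta^c$ is a single vertex.

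The only non-routine step is the finiteness of vertices via Dickson's lemma; the subsequent separation of finitely many linear values by a generic functional is a standard open-dense argument.
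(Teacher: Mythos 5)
Your proof is correct and follows essentially the same route as the paper: enumerate the finitely many vertices $v_1,\dots,v_N$ of $\Delta$ and observe that the set of $c\in\Q_+^d$ on which two of the values $\langle c,v_i\rangle$ coincide is a finite union of proper hyperplane slices, hence nowhere dense, so for generic $c$ the minimum is attained at a single vertex. You additionally justify the finiteness of the vertex set via Dickson's lemma, a point the paper merely asserts parenthetically; the only small slip is the claim that the vertices are \emph{exactly} the minimal elements of $\mathrm{supp}(\gamma)$ --- a minimal support point need not be a vertex (e.g.\ a minimal point lying on the segment between two other minimal points) --- but you only use and prove the inclusion of vertices into minimal elements, so the argument stands.
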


\begin{proof}
Let $V$ be the (finite) set of vertices of $\Delta$. 
Then the set 
$$
U=\{c\in \Q_{+}^d: \forall v,w\in V (v\neq w \Rightarrow \langle c, v\rangle \neq \langle c, w\rangle) \}
$$
is open and dense in $\Q_{+}^d$ and for every $c\in U$ there is exactly one vertex 
$v$ of $\Delta$ such that $\langle c, v\rangle = \min \{\langle c, w\rangle: w\in V \}$. 
\end{proof}

With every $c=(c_1,\dots,c_d)\in \Q_{+}^d$ we associate the monomial substitution
$(X_1,\dots,X_d)=(T^{c_1},\dots,T^{c_d})$ written  $X=T^c$.
Applying this substitution to~$f=f(X,Y) \in \K[[X^{1/m}]][Y]$ we define 
$f^{[c]} := f(T^c,Y)\in \K[[T^{1/Nm}]][Y]$, 
where $N$ is a common denominator of coordinates of~$c$.

\begin{Lemma}\label{L:comp1}
Let $\gamma_1$, $\gamma_2 \in \K[[X^{1/m}]]$  be nonzero fractional power series. 
If  $\ord\gamma_1^{[c]} = \ord \gamma_2^{[c]}$ for generic $c\in\Q_{+}^d$,
then $\Delta(\gamma_1)=\Delta(\gamma_2)$. \label{cor:delta}
\end{Lemma}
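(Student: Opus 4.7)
The plan is to identify $\ord \gamma^{[c]}$ with the support function of the Newton polytope $\Delta(\gamma)$ evaluated at $c$, and then recover the polytope from that function.

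Writing $\gamma=\sum_{a}c_a X^a$, the substitution $X=T^c$ yields $\gamma^{[c]}=\sum_a c_a T^{\langle c,a\rangle}$, so
\[
\ord\gamma^{[c]}=\min\{\langle c,a\rangle:c_a\neq 0\}=\min_{v\in\Delta(\gamma)}\langle c,v\rangle=:h_{\Delta(\gamma)}(c),
\]
the last equality because $c\in\Q_+^d$ has strictly positive coordinates, so the linear form $\langle c,\cdot\rangle$ attains its minimum on $\Delta(\gamma)=\mathrm{conv}(\mathrm{supp}\,\gamma)+\R_{\geq 0}^d$ at a vertex (hence at a point of the support). Thus the hypothesis reads $h_{\Delta(\gamma_1)}=h_{\Delta(\gamma_2)}$ on some open dense $U\subset \Q_+^d$.

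Each $h_{\Delta(\gamma_i)}$ is the minimum of finitely many linear forms indexed by the vertices of $\Delta(\gamma_i)$, hence continuous and piecewise linear on the strictly positive orthant. Since $U$ is dense in $\Q_+^d$, which is itself dense in the strictly positive real orthant, continuity propagates the equality to all strictly positive real vectors.

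The final and crucial step is to deduce $\Delta(\gamma_1)=\Delta(\gamma_2)$ from coincidence of support functions on the positive orthant. Suppose not; then by symmetry there is a vertex $v_0$ of $\Delta(\gamma_1)$ with $v_0\notin \Delta(\gamma_2)$. By the separating hyperplane theorem applied to the closed convex set $\Delta(\gamma_2)$ and the point $v_0$, one obtains $c\in\R^d$ with $\langle c,v_0\rangle<\inf_{w\in\Delta(\gamma_2)}\langle c,w\rangle$; for the right-hand side to be finite, $c$ must lie in the polar of the recession cone $\R_{\geq 0}^d$, i.e.\ $c\in\R_{\geq 0}^d$. A small perturbation $c':=c+\varepsilon(1,\dots,1)$ with $\varepsilon>0$ small then lies in the strictly positive orthant and still satisfies $\langle c',v_0\rangle<h_{\Delta(\gamma_2)}(c')$ (the perturbation changes both sides by a bounded amount since $v_0$ and all vertices of $\Delta(\gamma_2)$ belong to $\R_{\geq 0}^d$), contradicting the equality $h_{\Delta(\gamma_1)}(c')=h_{\Delta(\gamma_2)}(c')$ established above.

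The main obstacle is this last step: the hypothesis only controls $c$ with strictly positive coordinates, so the separating normal supplied by Hahn--Banach, which a priori lies only in $\R_{\geq 0}^d$, must be nudged into the open positive orthant while keeping the inequality strict. Everything else (the support-function identity, the continuity argument) is routine.
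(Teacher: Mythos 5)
Your proof is correct and rests on the same two ideas as the paper's: that for generic $c$ the quantity $\ord\gamma^{[c]}$ equals the support function $\min_{v\in\Delta(\gamma)}\langle c,v\rangle$, and that a difference between the two Newton polytopes can be detected by a separating linear form with strictly positive coefficients. You package it differently (extend the support-function equality from a dense set to all of $\R_{>0}^d$ by continuity, then argue that support functions determine the polytope), while the paper runs the contrapositive directly (separate, then nudge $c$ into a generic open set). These are equivalent; yours is slightly longer because the continuity step is not needed in the paper's ordering of the argument.

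Two small points you should tighten. First, the displayed identity $\ord\gamma^{[c]}=\min\{\langle c,a\rangle:c_a\ne0\}$ is \emph{not} true for every $c$: distinct exponents $a,a'$ with $\langle c,a\rangle=\langle c,a'\rangle$ can produce cancellation after the substitution $X=T^c$, raising the order. It does hold for generic $c$ (by Lemma~\ref{L:polygon} the minimizing face is a single vertex, so no cancellation), which is all you use, but as written the identity is asserted unconditionally. Second, the claim that some \emph{vertex} $v_0$ of $\Delta(\gamma_1)$ lies outside $\Delta(\gamma_2)$ deserves one line: if every vertex of $\Delta(\gamma_1)$ were in $\Delta(\gamma_2)$, then, since $\Delta(\gamma_1)$ is the convex hull of its vertices translated by $\R_{\ge0}^d$ and $\Delta(\gamma_2)$ is convex and stable under adding $\R_{\ge0}^d$, we would get $\Delta(\gamma_1)\subset\Delta(\gamma_2)$. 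Your perturbation argument $c\mapsto c+\varepsilon(1,\dots,1)$ is correct and in fact makes explicit a step the paper leaves implicit, namely that the separating normal may be taken strictly positive.
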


\begin{proof}
Suppose that $\Delta(\gamma_1)\neq \Delta(\gamma_2)$.
Without loss of generality we may assume that  $\Delta(\gamma_1)\setminus \Delta(\gamma_2)$
is nonempty.  Since $\Delta(\gamma_2)$ is convex and closed, for any 
$v\in \Delta(\gamma_1)\setminus \Delta(\gamma_2)$ there exists
$c\in\R_{+}^d$ such that 
$\langle c,v\rangle < \inf_{w\in \Delta(\gamma_2)}\langle c,w\rangle$.
Then by Lemma~\ref{L:polygon} there is a vertex $v_0$ of $\Delta(\gamma_1)$
and an open set $U\subset \Q_+^d$ such that 
$\Delta(\gamma_1)^c=\{v_0\}$ and 
$\langle c,v_0\rangle < \inf_{w\in \Delta(\gamma_2)}\langle c,w\rangle$
for all $c\in U$. 
We get $\ord\gamma_1^{[c]}=\langle c, v_0 \rangle$ 
because in the fractional power series $\gamma_1^{[c]}$ 
there is no cancellation of the terms of order $\langle c, v_0 \rangle$ and 
$\ord\gamma_2^{[c]}>\langle c, v_0 \rangle$ (since all monomials appearing in~
$\gamma_2^{[c]}$ have orders bigger than $\langle c, v_0 \rangle$). 
Thus $\ord\gamma_1^{[c]} < \ord \gamma_2^{[c]}$ for $c\in U$. 
\end{proof}

\begin{Lemma}\label{Wn1}
Let $f\in\K[[X^{1/m}]][Y]$ be a nonzero polynomial. 
Given $c\in \Q_{+}^d$ we define the linear mapping 
$L_c:\R^d \times \R\to \R^2$, $L_{c}(x,y)=(\langle c, x \rangle,y)$.
Then for generic $c\in\Q_{+}^d$
$$\Delta( f^{[c]})=L_{c}(\Delta(f)).$$
\end{Lemma}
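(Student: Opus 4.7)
The plan is to prove equality by double inclusion. The inclusion $\Delta(f^{[c]}) \subseteq L_c(\Delta(f))$ holds for every $c \in \Q_+^d$: each monomial $T^t Y^b$ of $f^{[c]}$ arises from regrouping monomials $X^a Y^b$ of $f$ with $\langle c, a\rangle = t$, and its exponent $(t, b) = L_c(a, b)$ lies in $L_c(\Delta(f))$; combined with $L_c(\R_{\geq 0}^{d+1}) = \R_{\geq 0}^2$, which holds when $c \in \Q_+^d$, this yields the inclusion unconditionally.

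For the reverse inclusion I would write $f = \sum_b g_b(X) Y^b$ (a finite sum since $f$ is a polynomial in $Y$) and invoke Lemma~\ref{L:polygon} on each nonzero $g_b$: it supplies an open dense set $U_b \subseteq \Q_+^d$ on which $\Delta(g_b)^c$ is a single vertex, so the lowest-order term of $g_b^{[c]}$ suffers no cancellation and $\mathrm{ord}(g_b^{[c]}) = \nu_c(g_b) := \min\{\langle c, a\rangle : a \in \mathrm{supp}\, g_b\}$. The finite intersection $U := \bigcap_b U_b$ is still open and dense in $\Q_+^d$.

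The two halves are glued by the elementary observation that for any $h = \sum_b h_b(T) Y^b \in \K[[T^{1/m}]][Y]$ one has
\[
\Delta(h) = \mathrm{conv}\bigl(\bigl\{(\mathrm{ord}\, h_b, b) : h_b \neq 0\bigr\} + \R_{\geq 0}^2\bigr),
\]
because within each $Y$-slice all $T$-monomials lie in the horizontal tail of the lowest-order one. Applied to $h = f^{[c]}$, this expresses $\Delta(f^{[c]})$ in terms of the $\mathrm{ord}(g_b^{[c]})$; an analogous unpacking of the definition, using $L_c(\R_{\geq 0}^{d+1}) = \R_{\geq 0}^2$ to reduce the generating set of $L_c(\Delta(f))$ within each $b$-slice to its leftmost point, gives $L_c(\Delta(f)) = \mathrm{conv}\bigl(\{(\nu_c(g_b), b) : g_b \neq 0\} + \R_{\geq 0}^2\bigr)$. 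The desired equality then reduces to the pointwise matching $\mathrm{ord}(g_b^{[c]}) = \nu_c(g_b)$ for each $b$ with $g_b \neq 0$, which is exactly what $c \in U$ provides. The step I expect to require the most care is verifying the two polygon formulas in this compatible ``points plus orthant'' form; once in place, Lemma~\ref{L:polygon} matches the two sides slice by slice and the argument closes immediately.
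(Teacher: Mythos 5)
Your proof is correct and takes essentially the same approach as the paper: decompose $f$ by powers of $Y$, invoke Lemma~\ref{L:polygon} on each coefficient to guarantee no cancellation of lowest-order terms for generic $c$, and match the two polytopes slice by slice. The paper's terse conclusion that ``the vertices of $L_c(\Delta(f))$ belong to $\Delta(f^{[c]})$, which gives the desired equality'' is exactly the double-inclusion plus the points-plus-orthant bookkeeping you spell out.
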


\begin{proof}
Write 
$f=a_n(X)Y^n+a_{n-1}(X)Y^{n-1}+\cdots+a_0(X)$ and
$ f^{[c]}=\bar a_n(T)Y^n+\bar a_{n-1}(T)Y^{n-1}+\cdots+\bar a_0(T)$.
By Lemma~\ref{L:polygon} for generic $c\in \Q_{+}^d$ and for any nonzero 
$a_i(X)$ the polygon  $\Delta(a_i(X))^{c}$ is a vertex of $\Delta(a_i(X))$.
Denote this vertex by $v_i$. 
Then $\ord \bar a_i(T)=\langle c, v_i \rangle$ because in the fractional power series 
$\bar a_i(T)=a_i(T^{c})$ 
there is no cancellation of the terms of the lowest order. 
Thus the vertices of $L_c(\Delta(f))$ belong to $\Delta( f^{[c]})$  which gives 
the desired equality.
\end{proof}

\begin{Remark}\label{Galois}
{\rm Let $K$ (respectively $L$) be the field of fractions of the ring $\K[[X]]$ 
(respectively $\K[[X^{1/m}]]$). Denote by ${\rm Gal}(L/K)$ the Galois group of the extension $K<L$. Then $L$ is normal over $K$ (as the splitting field of the family of polynomials $\{Y^m-X_i\in\mathbb{K}[[X]][Y]:i=1,\dots,d\}$) and every $\sigma\in {\rm Gal}(L/K)$ is given by 
$$
\sigma\Bigl( \sum_{a\in\mathbb{N}^d} c_a X^{a/m}\Bigr)= 
\sum_{a\in\mathbb{N}^d}\underline{\varepsilon}^{a}c_a X^{a/m}
$$
for some $\underline{\varepsilon} =(\varepsilon_1,\dots,\varepsilon_d)$ with $\varepsilon_l^m=1$. 
In particular, $\Delta(\sigma(\gamma))=\Delta(\gamma)$ for all nonzero $\gamma\in \K[[X^{1/m}]]$.}
\end{Remark}

For  $\alpha\in \K[[T^{1/m}]]$ and a finite set $A\subset \K[[T^{1/m}]]$ we define the {\it contact between} $\alpha$ and $A$ as ${\rm cont}(A,\alpha):=\max_{\gamma\in A} {\rm ord}(\alpha-\gamma)$.

From now on up to the end of this section we work under the assumption that  
$f\in \K[[X]][Y]$  is a quasi-ordinary irreducible polynomial of characteristic 
$(h_1,\dots,h_s)$ and ${\rm Zer}\, f=\{\alpha_1,\dots,\alpha_n\}$ is the set of its roots.

\begin{Theorem}\label{T21}
Let $g\in \K[[X]][Y]$ be a Weierstrass polynomial. 
If $c\in\Q_{+}^d$ is generic, then for any $\beta,\beta'\in\Zer f^{[c]}$ one has
 ${\rm cont}(\Zer\, g^{[c]},\beta)= {\rm cont}(\Zer\, g^{[c]},\beta')$. 
\end{Theorem}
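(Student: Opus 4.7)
The plan is to re-express $\cont(\Zer g^{[c]},\beta)$ as information encoded in a Newton polygon and then use the Galois action of Remark~\ref{Galois} to show that, for generic $c$, this Newton polygon does not depend on which root $\beta\in\Zer f^{[c]}$ is chosen.

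Concretely, I would expand $g(Y+Z)=\sum_{k=0}^{\deg g}C_k(Z)\,Y^k$ with $C_k(Z)\in\K[[X]][Z]$. Evaluating at $Z=\alpha_i\in\Zer f$ gives $g(Y+\alpha_i)=\sum_k C_k(\alpha_i)\,Y^k$ with $C_k(\alpha_i)\in L$, and after the substitution $X=T^c$ the one-variable polynomial $g^{[c]}(Y+\alpha_i^{[c]})=\sum_k C_k(\alpha_i)^{[c]}\,Y^k$ has as its roots precisely the shifts $\gamma-\alpha_i^{[c]}$ for $\gamma\in\Zer g^{[c]}$. By the classical one-variable Newton polygon theorem, the largest order of such a root---which is exactly $\cont(\Zer g^{[c]},\alpha_i^{[c]})$---is determined entirely by the sequence $\bigl(\ord C_k(\alpha_i)^{[c]}\bigr)_{k=0}^{\deg g}$.

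Next I would invoke the Galois symmetry. Since $f$ is irreducible in $\K[[X]][Y]$ and $\K[[X]]$ is a UFD, $f$ is irreducible in $K[Y]$ by Gauss's lemma, so $G=\mathrm{Gal}(L/K)$ acts transitively on $\Zer f$. Given $\alpha_i,\alpha_j\in\Zer f$, pick $\sigma\in G$ with $\sigma(\alpha_i)=\alpha_j$. Because $C_k\in K[Z]$ is fixed coefficient-wise by $\sigma$, one has $C_k(\alpha_j)=\sigma(C_k(\alpha_i))$, and Remark~\ref{Galois} yields $\Delta(C_k(\alpha_j))=\Delta(C_k(\alpha_i))$ for every $k$. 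Lemma~\ref{L:comp1} then gives $\ord C_k(\alpha_i)^{[c]}=\ord C_k(\alpha_j)^{[c]}$ for generic $c$ whenever $C_k(\alpha_i)\ne 0$ (both sides are $+\infty$ otherwise). Intersecting these finitely many open dense conditions over all $i,j,k$---together with the generic condition that the $\alpha_i^{[c]}$ stay pairwise distinct, so that $\Zer f^{[c]}=\{\alpha_1^{[c]},\dots,\alpha_n^{[c]}\}$---yields a single generic set on which the Newton polygons of $g^{[c]}(Y+\alpha_i^{[c]})$ and $g^{[c]}(Y+\alpha_j^{[c]})$ coincide, and hence so do the maximum orders of their roots.

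The main obstacle I anticipate is the Newton polygon reduction at the beginning: noticing that the scalar-valued invariant $\cont(\Zer g^{[c]},\beta)$ is encoded in $g^{[c]}(Y+\beta)$ through its Newton polygon. Once that step is made, the Galois invariance built into Remark~\ref{Galois} (and refined by Lemma~\ref{L:comp1}) transfers through the coefficients of $g(Y+\alpha_i)$ to the contact itself, and only a routine collection of generic conditions remains.
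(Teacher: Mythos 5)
Your proof is essentially the paper's argument, just unrolled at the level of the coefficients of $g(Y+Z)$. The paper applies a Galois automorphism $\sigma$ directly to $g(Y+\alpha_i)$, observes via Remark~\ref{Galois} that $\Delta\bigl(g(Y+\alpha_i)\bigr)=\Delta\bigl(g(Y+\alpha_j)\bigr)$, pushes this through $X=T^c$ via Lemma~\ref{Wn1} to get $\Delta(\bar g_i)=\Delta(\bar g_j)$, and finishes with Lemma~\ref{L:comp}. Your coefficient-wise reduction (comparing $\Delta(C_k(\alpha_i))$ with $\Delta(C_k(\alpha_j))$ and then their orders after the substitution) is precisely what the proof of Lemma~\ref{Wn1} does internally, so you are reproving that lemma inline rather than citing it. Both routes then land on the one-variable Newton polygon statement (Lemma~\ref{L:comp}) to read off the maximal root order. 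This is the same proof at heart.

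One small inaccuracy: you cite Lemma~\ref{L:comp1} to pass from $\Delta(C_k(\alpha_i))=\Delta(C_k(\alpha_j))$ to $\ord C_k(\alpha_i)^{[c]}=\ord C_k(\alpha_j)^{[c]}$ for generic $c$, but Lemma~\ref{L:comp1} goes in the opposite direction (equal generic orders $\Rightarrow$ equal polytopes). The implication you actually need is the converse, which does hold and is elementary: by Lemma~\ref{L:polygon}, for generic $c$ the face $\Delta(C_k(\alpha_i))^c$ is a single vertex $v$ and there is no cancellation, so $\ord C_k(\alpha_i)^{[c]}=\langle c,v\rangle$, and the same $v$ works for $\alpha_j$. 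This is also exactly what Lemma~\ref{Wn1} encodes, so replacing the citation of Lemma~\ref{L:comp1} by a citation of Lemma~\ref{Wn1} (or a one-line appeal to Lemma~\ref{L:polygon}) fixes the write-up without changing the argument. Your added remark about keeping the $\alpha_i^{[c]}$ pairwise distinct for generic $c$ is a sensible genericity condition that the paper leaves implicit.
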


\begin{proof}
For brevity we will write $\overline{p}$ instead of ${p}^{[c]}$ for every $p\in \K[[X^{1/m}]][Y]$. 

Since $f=\prod_{i=1}^n(Y-\alpha_i)$, we get $\bar f = \prod_{i=1}^n(Y-\bar\alpha_i)$ 
and consequently 
$\beta= \bar\alpha_i$, $\beta'=\bar\alpha_j$ for some $\alpha_i, \alpha_j\in\Zer f$. 
The roots $\alpha_i$, $\alpha_j$ are conjugate by the Galois automorphism. 
Hence by Remark~\ref{Galois} the Newton polytopes 
of $g_i=g(Y+\alpha_i)$ and $g_j=g(Y+\alpha_j)$ are equal.
By Lemma~\ref{Wn1}, 
the Newton polygons of $\bar g_i$ and  $\bar g_j$ are also equal. 

If $\Zer\, \bar g=\{\gamma_1,\dots,\gamma_k\}$ then 
$\Zer\, \bar g_i = \{\gamma_1-\beta,\dots,\gamma_k-\beta\}$ and 
$\Zer\, \bar g_j= \{\gamma_1-\beta' ,\dots,\gamma_k-\beta'\}$.
Hence it follows from Lemma~\ref{L:comp} that
${\rm cont}(\Zer\,\bar g,\beta)={\rm cont}(\Zer\,\bar g,\beta')$.
\end{proof}

If $A$ is any set, then $\# A$ denotes the cardinality of $A$.

\begin{Lemma}[Contact structure of $\Zer\,f$]\label{contact}
For every $\tilde\alpha \in\Zer f$ and $i\in\{1,\dots,s\}$ we have 
$\#\{\alpha \in {\rm Zer}\, f:d(\alpha,\tilde\alpha)> h_i\}=e_i$ and 
$\# \{\alpha\in {\rm Zer}\, f: d(\alpha,\tilde\alpha)=h_i\}=e_{i-1}-e_i.$
\end{Lemma}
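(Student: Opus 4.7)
The proof I have in mind is Galois-theoretic. Fix $m$ so that $\Zer f \subset \K[[X^{1/m}]]$, let $K,L$ denote the fraction fields of $\K[[X]]$ and $\K[[X^{1/m}]]$, and $G := \mathrm{Gal}(L/K)$; by Remark~\ref{Galois}, $G \cong \mu_m^d$ acts by $\sigma_{\underline{\varepsilon}}(X^a) = \underline{\varepsilon}^{ma}X^a$ for $a \in \tfrac{1}{m}\N^d$. Since $f$ is irreducible, $G$ acts transitively on $\Zer f$. Applying any $\sigma \in G$ to $\alpha - \alpha' = u(X)X^{d(\alpha,\alpha')}$ only multiplies each monomial by a root of unity, preserving the leading exponent and the condition $u(0) \neq 0$; thus $d(\sigma\alpha, \sigma\alpha') = d(\alpha,\alpha')$. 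Consequently both cardinalities in the lemma are constant on $G$-orbits, and by transitivity are independent of $\tilde\alpha$; I fix a convenient one.

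With $\tilde\alpha$ fixed, put $H := \mathrm{Stab}_G(\tilde\alpha)$, so that $\sigma H \mapsto \sigma\tilde\alpha$ identifies $G/H$ with $\Zer f$. For $i \in \{1,\dots,s\}$ set
$$K_i := \{\sigma \in G : \sigma\tilde\alpha = \tilde\alpha \text{ or } d(\sigma\tilde\alpha, \tilde\alpha) > h_i\}.$$
Combining the strong triangle inequality with the $G$-invariance of $d$ shows $K_i$ is a subgroup of $G$ containing $H$, and the orbit identification matches $K_i/H$ with $\{\alpha \in \Zer f : d(\alpha,\tilde\alpha) > h_i\}$. So the first statement of the lemma reduces to $[K_i : H] = e_i$, and the second follows by subtraction, since $d(\alpha,\tilde\alpha)$ takes values only in $\{h_1,\dots,h_s,\infty\}$.

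To compute $[K_i : H]$ I pass to a lattice index. Writing $\tilde\alpha = \sum_a c_a X^a$, the identity $\sigma_{\underline{\varepsilon}}\tilde\alpha - \tilde\alpha = \sum_a (\underline{\varepsilon}^{ma} - 1)\, c_a X^a$ shows $\sigma_{\underline{\varepsilon}} \in K_i$ iff $\underline{\varepsilon}^{ma} = 1$ for every $a \in \mathrm{supp}(\tilde\alpha)$ with $a \not> h_i$. Thus $K_i$ annihilates the sublattice $\Lambda_i := \Z^d + \Z\langle a \in \mathrm{supp}(\tilde\alpha) : a \not> h_i\rangle$ of $\tfrac{1}{m}\Z^d$, while $H$ annihilates $\Lambda_s := \Z^d + \Z\cdot\mathrm{supp}(\tilde\alpha)$, and duality for the perfect pairing $G \times (\tfrac{1}{m}\Z^d/\Z^d) \to \mu_m$ gives $[K_i : H] = [\Lambda_s : \Lambda_i]$. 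The crux is then the identification $\Lambda_i = M_i$. One inclusion is easy: each $h_j$ with $j \leq i$ must appear in $\mathrm{supp}(\tilde\alpha)$ (otherwise $h_j$ could not be realized as the contact of $\tilde\alpha$ with any conjugate, because the Galois action preserves supports) and satisfies $h_j \leq h_i$, so $h_j \in \Lambda_i$; this yields $M_i \subset \Lambda_i$. The reverse inclusion $\Lambda_i \subset M_i$ is the structural input from the classical theory of quasi-ordinary roots (see e.g.\ \cite{Lipman}): every monomial of $\tilde\alpha$ whose exponent is not strictly greater than $h_i$ already lies in $M_i$. Granting it, $[\Lambda_s : \Lambda_i] = [M_s : M_i] = n_{i+1}\cdots n_s = e_i$, as required. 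The hard part is this last inclusion: the Galois bookkeeping is essentially formal, but pinning down which exponents of $\tilde\alpha$ occur below each characteristic level is where the quasi-ordinary hypothesis is really used.
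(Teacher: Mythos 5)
The paper does not give its own proof---it simply cites \cite[Proposition~3.1]{GP}---so there is no argument in the text to compare against directly. Your Galois-theoretic argument is correct: you identify $\Zer f$ with $G/H$, verify via Galois-invariance of the contact and the ultrametric inequality that each contact-level set $\{\alpha:d(\alpha,\tilde\alpha)>h_i\}$ (with the convention $d(\tilde\alpha,\tilde\alpha)=\infty$, which is exactly what makes the count come out to $e_i$ rather than $e_i-1$) corresponds to an intermediate subgroup $K_i$, and then compute $[K_i:H]$ by Pontryagin duality against the lattices $\Lambda_i$; the crucial input you isolate---that every exponent of $\tilde\alpha$ not strictly above $h_i$ already lies in $M_i$---is indeed the standard structural fact about quasi-ordinary branches from \cite{Lipman}, and your easy inclusion $M_i\subset\Lambda_i$ combined with it gives $\Lambda_i=M_i$, hence $[K_i:H]=[M_s:M_i]=e_i$. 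This is in essence the same Galois/lattice reasoning that underlies the reference in \cite{GP}, so the approaches should be regarded as the same.
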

\begin{proof}
See the proof of Proposition~3.1 from \cite{GP}.
\end{proof}

Fix $c\in\Q_{+}^d$ and for every $w\in\Q^d$ denote 
$\overline{w}:=\langle c,w\rangle$. 
We set $\overline{h_0}=0$, $\overline{h_{s+1}}=+\infty$, $\overline{q_0}=0$
and define a continuous function
$\phi_c:[0,+\infty)\to[0,+\infty)$ such that 
\begin{itemize}
\item[{\rm (i)}] $\phi_c(\overline{h_i})=\overline{q_i}$ for $i=0,\dots, s$;
\item[{\rm (ii)}] $\phi_c$ is linear in each interval $(\overline{h_i},\overline{h_{i+1}})$
                         for $i=0,\dots s$;
\item[{\rm (iii)}] the graph of $\phi_c$ has slope 1 over the interval $(\overline{h_s},+\infty)$.
\end{itemize}

\begin{Lemma}\label{L:ineq}
The function $\phi_c:[0,+\infty)\to[0,+\infty)$ is increasing.
If $\bar\gamma$ is a~Puiseux series and 
${\rm cont}(\Zer f^{[c]},\bar\gamma)=h$
then $\ord f^{[c]}(\bar\gamma)=\phi_c(h)$.
\end{Lemma}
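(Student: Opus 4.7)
The plan is to tackle the two assertions in sequence, using a common setup. For monotonicity, I would compute the slope of each linear piece of $\phi_c$. A short manipulation of formula~(\ref{Eq1}) gives $q_{i+1}-q_i = e_i(h_{i+1}-h_i)$ for $i=0,\dots,s-1$ (with the convention $q_0=0$), so $\overline{q_{i+1}}-\overline{q_i} = e_i(\overline{h_{i+1}}-\overline{h_i})$. Hence the slope of $\phi_c$ over $(\overline{h_i},\overline{h_{i+1}})$ equals the positive integer $e_i$; together with the slope $e_s=1$ beyond $\overline{h_s}$ coming from condition~(iii), and the fact that $\overline{h_0}\leq\overline{h_1}\leq\cdots$ is nondecreasing (since pairing with $c\in\Q_{+}^d$ preserves the partial order on $\Q_{\geq 0}^d$), this shows $\phi_c$ is nondecreasing, and strictly increasing wherever consecutive breakpoints are genuinely separated.

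For the order formula I would fix a root $\tilde\alpha\in\Zer f$ realizing the maximum, so $\ord(\bar\gamma-\bar{\tilde\alpha})=h$. The relation $\tilde\alpha-\alpha = u(X)X^{d(\tilde\alpha,\alpha)}$ with $u(0)\neq 0$ yields, after the substitution $X=T^c$, the order $\ord(\bar{\tilde\alpha}-\bar\alpha)=\overline{d(\tilde\alpha,\alpha)}$ (since $u(T^c)$ still has nonzero constant term). The strong triangle inequality sharpens to an equality whenever the two relevant orders differ; in the borderline case $h=\overline{d(\tilde\alpha,\alpha)}$, the definitional upper bound $\ord(\bar\gamma-\bar\alpha)\leq h$ closes the gap. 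In every case one obtains
$$\ord(\bar\gamma-\bar\alpha)=\min\{h,\overline{d(\tilde\alpha,\alpha)}\}.$$

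Summing over $\alpha\in\Zer f$, adding the contribution $h$ from $\tilde\alpha$ itself, and using Lemma~\ref{contact} to count multiplicities gives
$$\ord f^{[c]}(\bar\gamma)=h+\sum_{i=1}^{s}(e_{i-1}-e_i)\min\{h,\overline{h_i}\}.$$
If $\overline{h_k}\leq h<\overline{h_{k+1}}$, splitting the sum at $i=k$ and telescoping $\sum_{i=k+1}^{s}(e_{i-1}-e_i)=e_k-1$ yields
$$\ord f^{[c]}(\bar\gamma)=e_k h+\sum_{i=1}^{k}(e_{i-1}-e_i)\overline{h_i},$$
which agrees with $\phi_c(h)=\overline{q_k}+e_k(h-\overline{h_k})$ via the identity $\overline{q_k}-e_k\overline{h_k}=\sum_{i=1}^{k}(e_{i-1}-e_i)\overline{h_i}$ read off (\ref{Eq1}). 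The main delicacy I anticipate is the borderline case $\overline{d(\tilde\alpha,\alpha)}=h$, where the strong triangle inequality alone gives only a one-sided bound and one must invoke the maximality of $h$; apart from that, the argument is essentially bookkeeping, and the degenerate case $\overline{h_k}=\overline{h_{k+1}}$ causes no trouble because both sides of the final identity agree by continuity of $\phi_c$.
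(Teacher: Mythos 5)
Your proposal is correct and follows essentially the same route as the paper's proof: fix a root $\tilde\alpha$ realizing the contact, use the ultrametric (strong triangle) inequality to evaluate each $\ord(\bar\gamma-\bar\alpha)$, count multiplicities via Lemma~\ref{contact}, and match the result against $\phi_c$ via formula~(\ref{Eq1}). The paper splits the sum $\sum_j \ord(\bar\gamma-\bar\alpha_j)$ directly into the groups $d(\alpha_j,\tilde\alpha)\le h_r$ and $>h_r$, whereas you first write each term as $\min\{h,\overline{d(\tilde\alpha,\alpha)}\}$ and then regroup — a cosmetic reorganization of the same argument, with slightly more care spent on the borderline case.
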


\begin{proof}
By equality~(\ref{Eq1}) we get
$\overline{q_{i+1}}=\overline{q_i} + e_{i}(\overline{h_{i+1}}-\overline{h_{i}})$
for $i=0,\dots, s-1$,
hence the numbers $\overline{q_i}$ form an increasing sequence.

Let $h={\rm cont}(\Zer f^{[c]},\overline{\gamma} )={\rm ord}(\overline{\gamma}-\overline{\alpha})$ for some $\alpha\in \Zer f$. 
Assume that $h\in(\overline{h_r},\overline{h_{r+1}}]$.
Then, by the strong triangle inequality and  Lemma~\ref{contact}, we get
\begin{eqnarray*}
\ord f^{[c]}(\overline{\gamma})  &=& \sum_{j=1}^n \ord(\overline{\gamma}-\overline{\alpha_j}) \\
&=& \sum_{\ord\,(\overline{\alpha_j}-\overline{\alpha})\leq h_r}
\ord\,(\overline{\gamma}-\overline{\alpha_j}) + 
\sum_{\ord\,(\overline{\alpha_j}-\overline{\alpha})>h_r}
\ord\,(\overline{\gamma}-\overline{\alpha_j}) \\
&=& \sum_{\ord\,(\overline{\alpha_j}-\overline{\alpha})\leq h_r}
\ord\,(\overline{\alpha}-\overline{\alpha_j}) + 
\sum_{\ord\,(\overline{\alpha_j}-\overline{\alpha})>h_r}
\ord\,(\overline{\gamma}-\overline{\alpha}) \\
&=& \sum_{i=1}^{r} (e_{i-1}-e_i)\overline{h_i} +  e_{r}h= \overline{q_{r}}+e_r(h-\overline{h_r}).
\end{eqnarray*}
\end{proof}

Let $\alpha \in \Zer f$ and $h_r$ be a characteristic exponent of $f$.
By definition, the $h_r$--{\it truncation} of $\alpha$
 is the fractional power series $\trunc_r(  \alpha)$ 
 obtained from $\alpha$ by omitting all terms of order $\geq h_r$. 
 We denote by $f_r$ the minimal polynomial of $\trunc_r(\alpha)$ 
 over the field $K$. 
 As we will see in the lemma below, this polynomial does not depend on $\alpha$.

\begin{Lemma} $\quad$
\begin{itemize}
\item[{\rm (i)}] ${\rm Zer}\, f_r=\{\,\trunc_r(\alpha_j):j=1,\dots,n\,\}$;
\item[{\rm (ii)}] $f_r \in \K[[X]]$ is monic, irreducible and  quasi-ordinary;
\item[{\rm (iii)}] $\deg f_r=n_1\cdots n_{r-1}$; 
\item[{\rm (iv)}] ${\rm Char}(f_r)=\{h_1,\dots,h_{r-1}\}$.
\end{itemize}
\end{Lemma}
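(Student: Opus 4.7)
The plan is to exploit the Galois action on $\Zer f$ together with the contact structure described in Lemma~\ref{contact}.

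First I would observe that for every $\sigma\in{\rm Gal}(L/K)$ the action of $\sigma$ commutes with $\trunc_r$: by Remark~\ref{Galois}, $\sigma$ multiplies each coefficient of a monomial $X^h$ by a scalar without changing the exponent $h$, so $\sigma(\trunc_r(\alpha))=\trunc_r(\sigma(\alpha))$. Since $f$ is irreducible, ${\rm Gal}(L/K)$ acts transitively on $\{\alpha_1,\dots,\alpha_n\}$, and hence transitively on $\{\trunc_r(\alpha_1),\dots,\trunc_r(\alpha_n)\}$. This set is therefore the full Galois orbit of $\trunc_r(\alpha)$ and coincides with $\Zer f_r$, proving~(i).

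Next I would count distinct truncations. Since $\alpha_i-\alpha_j=u_{ij}(X)X^{d(\alpha_i,\alpha_j)}$ with $u_{ij}(0)\neq 0$, every exponent appearing in $\alpha_i-\alpha_j$ is $\geq d(\alpha_i,\alpha_j)$ in the partial order. Hence $\trunc_r(\alpha_i)=\trunc_r(\alpha_j)$ if and only if every exponent of $\alpha_i-\alpha_j$ is $\geq h_r$, which in turn is equivalent to $d(\alpha_i,\alpha_j)\geq h_r$. Because $h_1<\dots<h_s$ is totally ordered, this is the same as $d(\alpha_i,\alpha_j)>h_{r-1}$. Lemma~\ref{contact} then gives, for each fixed $\alpha_i$, exactly $e_{r-1}$ such indices $j$, so $\trunc_r$ is $e_{r-1}$-to-$1$ on $\Zer f$ and $\deg f_r=\#\Zer f_r=n/e_{r-1}=n_1\cdots n_{r-1}$, giving~(iii).

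For~(ii), $f_r$ is monic and irreducible by its very definition as a minimal polynomial. Its coefficients are elementary symmetric functions of elements of $\K[[X^{1/m}]]$, which is integral over $\K[[X]]$, and they lie in $K$; since $\K[[X]]$ is integrally closed in $K$, the coefficients of $f_r$ belong to $\K[[X]]$. For quasi-ordinarity and the identification of the characteristic in~(iv), I would take two distinct roots $\trunc_r(\alpha_i)\neq\trunc_r(\alpha_j)$ of $f_r$. Then $d(\alpha_i,\alpha_j)\in\{h_1,\dots,h_{r-1}\}$, so the leading monomial $X^{d(\alpha_i,\alpha_j)}$ of $\alpha_i-\alpha_j$ survives the cut $\trunc_r$; combined with the fact that every exponent of $\alpha_i-\alpha_j$ is $\geq d(\alpha_i,\alpha_j)$, this yields $\trunc_r(\alpha_i)-\trunc_r(\alpha_j)=\tilde u_{ij}(X)X^{d(\alpha_i,\alpha_j)}$ with $\tilde u_{ij}(0)\neq 0$. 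The discriminant of $f_r$, a product of such expressions, then has the form $u(X)X^q$ with $u(0)\neq 0$, so $f_r$ is quasi-ordinary; and the set of contacts between distinct roots of $f_r$ is exactly $\{h_1,\dots,h_{r-1}\}$, proving~(iv).

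The main delicacy is the identity $\trunc_r(\alpha_i)-\trunc_r(\alpha_j)=\tilde u_{ij}(X)X^{d(\alpha_i,\alpha_j)}$ with $\tilde u_{ij}(0)\neq 0$: one needs to see both that the leading monomial is not cut off by $\trunc_r$ and that no tail contribution can cancel the constant term $u_{ij}(0)$. Both follow from the key observation that $\trunc_r(\alpha_i)-\trunc_r(\alpha_j)$ is exactly the part of $\alpha_i-\alpha_j$ supported on exponents $\not\geq h_r$. Everything else reduces to standard Galois-orbit and integral-closure bookkeeping.
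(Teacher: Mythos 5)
Your argument is correct and follows essentially the same route as the paper's: commute the Galois action with $\trunc_r$ to get (i) and transitivity (hence irreducibility), then read degree, quasi-ordinarity and the characteristic off the observation that $\trunc_r(\alpha_i)-\trunc_r(\alpha_j)$ is the part of $\alpha_i-\alpha_j$ supported on exponents $\not\geq h_r$, together with the contact structure of $\Zer f$. You spell out a couple of points the paper leaves implicit (the integral-closure argument for why the coefficients of $f_r$ land in $\K[[X]]$, and the explicit $e_{r-1}$-to-$1$ count that yields $\deg f_r$), but these are faithful elaborations, not a different method.
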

\begin{proof}
Since $\trunc_r(\alpha)\in \K[[X^{1/n}]]$ and  $L$ is normal over $K$, 
all the roots of the~polynomial $ f_{r}$ are elements of $L$.  
It is easy to see that $\sigma(\trunc_r(\alpha))=\trunc_r(\sigma(\alpha))$ for every $\sigma\in {\rm Gal}(L/K)$ . 
The polynomial $f$ is irreducible over the field $K$, 
so ${\rm Gal}(L/K)$ acts transitively on the set $\Zer f$ 
and hence on the set $\Zer f_r$, as well. 
This implies~(i) and (ii).

If $d(\alpha_i,\alpha_j)\leq h_{r-1}$, 
then $d(\alpha_i,\alpha_j)= d({\rm trunc}_r(\alpha_i),{\rm trunc}_r(\alpha_j))$ and if $d(\alpha_i,\alpha_j)\linebreak \geq h_r$, then    ${\rm trunc}_r(\alpha_i)= {\rm trunc}_r(\alpha_j)$. 
Thus (iv) holds true and, as a consequence, we also obtain (iii).
\end{proof}

\section{Proof of Theorem \ref{Th:irred1}}

The proof  will be organized as a sequence of claims. 
We denote the roots of $f$ (respectively the roots of $f_{k+1}$) by 
$\alpha_1$, \dots, $\alpha_n$ (respectively by $\beta_1$, \dots, $\beta_l$, where $l=n_1\cdots n_k$). 
We will use the bar notation for polynomials and power series 
after the monomial substitution $X=T^c$. 

Let $c\in\Q_{+}^d$ be generic in the sense that the conclusion of Theorem~\ref{T21} 
for a~polynomial $g$ and any $\overline{\beta}$, $\overline{\beta'}\in \Zer \overline{f_{k+1}}$ is true. 

\medskip\noindent
\textbf{Claim 1.}
For every $\overline{\beta}\in {\rm Zer}\, \overline{f_{k+1}}$ 
there exists exactly one $\gamma\in {\rm Zer}\,\overline{g}$ such that 
$\ord(\gamma-\overline{\beta})> \bar h_k$.
\begin{proof}
By assumptions of the theorem we get $\ord \Res(\overline{f}, \overline{g})> (\deg g)\bar q_k$.
If $\cont(\Zer\overline{f},\gamma) \leq \bar h_k$ for all the roots $\gamma$ of $\overline{g}$, 
then by Lemma~\ref{L:ineq} we obtain
$\ord \Res(\overline{f},\overline{g}) = 
\sum_{\gamma\in\Zer 
\overline{g}} \ord \overline{f}(\gamma) 
\leq (\deg \overline{g})\bar q_k$. 
It follows that $\ord(\overline{\alpha}-\gamma)> \overline{h_k}$ 
for some $\gamma\in \Zer\, \overline{g}$ and $\alpha\in\Zer f$.

Let $\beta=\trunc_{k+1}(\alpha)$.
Since $\ord (\overline{\beta}-\overline{\alpha})>\overline{h_k}$, we get 
$\ord(\overline{\beta}-\gamma)> \overline{h_k}$ and 
consequently $\cont(\Zer\,\overline{g},\overline{\beta}) > \overline{h_k}$.
It follows from Theorem~\ref{T21} that for every 
$\overline{\beta'}\in \Zer \overline{f_{k+1}}$ 
there exists $\gamma\in {\rm Zer}\,\overline{g}$ such that 
$\ord(\gamma-\overline{\beta'})>\overline{h_k}$.

Take any $\overline{\beta},\overline{\beta'}\in\Zer\overline{f_{k+1}}$ and 
$\gamma,\gamma'\in {\rm Zer}\,\overline{g}$ such that 
$\ord(\gamma-\overline{\beta})>\overline{h_k}$ and
$\ord(\gamma'-\overline{\beta'})>\overline{h_k}$. Assume that 
$\overline{\beta}\neq \overline{\beta'}$. Then $\gamma\neq\gamma'$.

Indeed, if $\gamma=\gamma'$ then 
$\ord(\overline{\beta}-\overline{\beta'})>\overline{h_k}$ and we arrive at contradiction. 
From the above and the assumption $\deg g\leq n_1\cdots n_k=\deg f_{k+1}$ 
we obtain that $\overline{g}$ has exactly $n_1\cdots n_k$ roots, which  completes the  proof.
\end{proof}

Using Claim 1 we may assume, without loss of generality, that 
$\Zer\,\overline{g}=\{\gamma_1,\dots,\gamma_l\}$, 
where $\ord(\gamma_i-\overline{\beta_i})> \overline{h_k}$ for all $1\leq i\leq l$. 
It follows immediately from the strong triangle inequality that 
\begin{equation}\label{orders}
 \ord(\gamma_i - \gamma_j)=\ord(\overline{\beta_i}-\overline{\beta_j})
\quad\mbox{for all $1\leq i<j\leq l$}.
\end{equation}
 Hence the orders of the discriminants of polynomials $\overline{g}$ and $\overline{f_{k+1}}$ are equal.  Therefore, by Lemma~\ref{cor:delta}, the Newton polytopes 
of the discriminants of $g$ and $f_{k+1}$ are equal too, so we conclude that $g$ is quasi-ordinary.

\medskip\noindent
\textbf{Claim 2.}
Let $v$ be a vertex of $\Delta(\gamma - \gamma')$ for $\gamma, \gamma' \in\Zer\, g$. 
Then $v\in\{h_1,\dots,h_k\}$.
\begin{proof}
Since $g$ is quasi-ordinary, $\Delta(\gamma-\gamma')$ has only one vertex. Thus
for every $c\in\Q_{+}^d$ we have $\ord(\gamma^{[c]}-\gamma'^{[c]}))=\langle c,v\rangle$.
It follows from~(\ref{orders}) that $\langle c,v\rangle\in \{\langle c,h_i\rangle: 1\leq i\leq k\}$.
Observe that if $v\ne h_i$, then the set
$\{c'\in\Q_{+}^d: \langle c',v\rangle=\langle c',h_i\rangle \}$ 
is contained in a finite union of hyperplanes, hence
is nowhere dense. 
This implies that $v=h_i$ for some $i\in\{1,\dots,k\}$, since $c$ is generic. 
\end{proof}

\noindent \textbf{Claim 3.}
Let $v$ be a vertex of $\Delta(\gamma-\beta)$ for 
$\gamma\in \Zer\,g$ and  $\beta\in\Zer f_{k+1}$. 
Then $v\in\{h_1,\dots,h_k\}$ or $v>h_k$. 
\begin{proof}
By the strong triangle inequality and~(\ref{orders}) we obtain 
$\ord(\gamma^{[c]}-\beta^{[c]}) \in \{\langle c,h_i\rangle: 1\leq i\leq k\}$ or 
$\ord(\gamma^{[c]}-\beta^{[c]})>\langle c,h_k\rangle$.

Let $v$ be a vertex of $\Delta(\gamma-\beta)$ which is not in $\Delta(X^{h_k})$. 
Then there exists an open set $U\subset \Q_{+}^d$ such that a face 
$\Delta(\gamma-\beta)^c=\{v\}$ 
and $\langle c,v\rangle < \langle c,h_k\rangle$ for all $c\in U$.
Hence  we have 
$\ord(\gamma^{[c]}-\gamma'^{[c]}))=\langle c,v\rangle<\langle c,h_k\rangle$.
Using the same argument as in the proof of Claim~2, we conclude that 
$v\in \{h_1,\dots, h_{k}\}$ which completes the proof. 
\end{proof}

\noindent
\textbf{Claim 4.}
For every $\beta\in \Zer f_{k+1}$ there exists exactly one $\gamma\in \Zer\,g$ such that 
$\Delta(\gamma-\beta)\subsetneq \Delta(X^{h_k})$. 
\begin{proof}
Let $\Delta=\Delta(\gamma-\beta)$ for $\gamma\in \Zer\,g$ and  $\beta\in\Zer f_{k+1}$.
Then by Claim 3 two cases are possible:
either some $h_i\in \{h_1,\dots, h_k\}$ is a vertex of  $\Delta$ 
and $\ord(\overline{\gamma}-\overline{\beta})=\overline{h_i}$
or 
$\Delta\subsetneq \Delta(X^{h_k})$ 
and $\ord(\overline{\gamma}-\overline{\beta})>\overline{h_k}$.
To finish the proof it is enough to use Claim~1. 
\end{proof}

We will show that ${\rm Gal}(L/K)$
acts transitively on the set $\Zer\,g$. 
Indeed, take arbitrary $\gamma, \gamma'\in \Zer\,g$. By Claim~4 there exist unique
$\beta,\beta'\in \Zer f_{k+1}$ such that 
$\Delta(\gamma-\beta)\subsetneq \Delta(X^{h_k})$ and 
$\Delta(\gamma'-\beta')\subsetneq \Delta(X^{h_k})$. 
Take $\sigma\in {\rm Gal}(L/K)$ such that $\sigma(\beta)=\beta'$. 
Then by Remark~\ref{Galois} we have $\Delta(\sigma(\gamma)-\beta')\subsetneq \Delta(X^{h_k})$, 
hence $\sigma(\gamma)=\gamma'$. 

It follows from the above that the polynomial $g$ is irreducible. 
From~(\ref{orders}) and Claim~2 we deduce that $(h_1,\dots, h_k)$ is the characteristic of $g$. 
Point~(ii) of the theorem follows directly from Claim~4. 
 
Now we prove statements (iii) and (iv) of Theorem~\ref{Th:irred1}. 
Assume that  $X^{(\deg g)q_{k+1}}$ divides  $\Res(f,g)$. 
If the monomial $X^{(\deg g)q_{k+1}}$ does not appear in $\Res(f,g)$ then 
by the first part of the theorem we obtain $\deg g = n_1\cdots n_{k+1}$,
which contradicts the assumption $\deg g \leq n_1\cdots n_k$. Thus 
$\Res(f,g) = u(X)\, X^{(\deg g)q_{k+1}}$, where $u(0)\neq0$.

By Claim~4  for every $\gamma\in \Zer\,g$ there exists $\beta\in \Zer f_{k+1}$ 
such that $\Delta(\gamma-\beta)\subsetneq \Delta(X^{h_k})$. 
Suppose that the Newton polytope $\Delta=\Delta(\gamma-\beta)$ 
has a vertex which is not contained in $\Delta(X^{h_{k+1}})$. 
Then there exists $c\in \Q_{+}^d$ such that $\Delta^c=\{v\}$ and 
$\langle c,v\rangle<\langle c,h_{k+1}\rangle$. 
Thus $\cont(\Zer \overline{f},\overline{\gamma})<\overline{h_{k+1}}$. 
Since for any $\sigma\in {\rm Gal}(L/K)$ we have 
$\Delta(\gamma-\beta)=\Delta(\sigma(\gamma)-\sigma(\beta))$, the same is true for 
any $\gamma'\in \Zer\, g$. 
Then by Lemma~\ref{L:ineq} we get 
$\ord(\Res(\overline{f},\overline{g}))<(\deg g)\overline{q_{k+1}}$ 
and we arrive at contradiction. 

We proved that $\Delta(\gamma-\beta)\subseteq \Delta(X^{h_{k+1}}) $ which gives (iv) 
of Theorem~\ref{Th:irred1}.

\section{Logarithmic distance}
For any irreducible Weierstrass polynomials $f$, $g\in \K[[X]][Y]$ we define the Newton polytope  
$\cont_A(f,g)=\frac{1}{(\deg f)(\deg g)}\Delta(\Res(f,g))$ 
called the {\it logarithmic distance} between $f$ and $g$.
We introduce the partial order in the set of Newton polytopes: 
$\Delta_1\geq \Delta_2$ if and only if $\Delta_1\subset \Delta_2$. 

For any  irreducible quasi-ordinary Weierstrass polynomials $f$, $g$, $h$ the strong triangle inequality
$\cont_A(f,g)\geq \inf\{\cont_A(f,h),\cont_A(h,g)\}$ holds true, where $\inf\{A,B\}$ 
denotes the Newton polytope spanned by the union of $A$ and $B$. Let us prove it now.

For $\alpha =\sum_{a\in\Q^d} c_a X^a\in\K[[X^{1/\mathbb{N}}]]$ and $\omega\in\mathbb{R}_{+}^d$ we define a {\it weighted order} ${\rm ord}_w(\alpha):=\min\{\langle \omega,a\rangle:c_a\ne 0\}$ and a {\it weighted contact} between quasi-ordinary polynomials $f,g\in\K[[X]][Y]$ as follows: $${\rm cont}_{\omega}(f,g):=\frac{1}{\deg f \deg g}\sum_{\substack{\alpha\in {\rm Zer}\, f\\ \beta\in {\rm Zer}\, g}} {\rm ord}_{\omega} (\alpha-\beta)=\frac{1}{\deg f \deg g} l(\omega,\Delta(\Res(f,g))),$$
where $l(\omega,\Delta(\Res(f,g))):=\min\{\langle \omega,a\rangle:a\in\Delta(\Res(f,g)\}$.
 
 For every irreducible quasi-ordinary polynomials $f,g\in \mathbb{K}[[X]][Y]$ and for any $\gamma,\gamma'\in {\rm Zer}\, g$ we have ${\rm ord}_{\omega} f(\gamma)={\rm ord}_{\omega} f(\gamma')$. Therefore, using the same method as in the proof of \cite[Proposition 2.2]{Pl}, we get 
for any irreducible quasi-ordinary polynomials $f$, $g$,  $h \in \mathbb{K}[[X]][Y]$ a strong triangle inequality ${\rm cont}_{\omega}(f,g)\geq \min\{{\rm cont}_{\omega}(f,h),{\rm cont}_{\omega}(h,g)\}$.

\begin{Lemma}
Assume that $\Delta_1,\Delta_2,\Delta_3$ are  Newton polytopes. If \begin{equation}l(\omega,\Delta_1)\geq \min\{l(\omega,\Delta_2), l(\omega,\Delta_3\}\label{e1}\end{equation} for all $\omega \in \R_{+}^d$, then $\Delta_1\geq \inf\{\Delta_2,\Delta_3\}$. 
\end{Lemma}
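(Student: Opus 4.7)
Set $\Delta_\star:=\inf\{\Delta_2,\Delta_3\}$, i.e., the convex hull of $\Delta_2\cup\Delta_3$. Since both $\Delta_2$ and $\Delta_3$ are Newton polytopes, the set $\Delta_2\cup \Delta_3$ is stable under translations by $\R_{\geq 0}^d$, and taking the convex hull preserves this property, so $\Delta_\star$ is again a Newton polytope. The goal is to show $\Delta_1\subset \Delta_\star$.

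The first step is to compute the support function of $\Delta_\star$. For every $\omega\in\R_{+}^d$, a linear functional attains its infimum over a convex hull at an extreme point, so
\[
l(\omega,\Delta_\star)=\min\{l(\omega,\Delta_2),\,l(\omega,\Delta_3)\}.
\]
Combined with hypothesis (\ref{e1}), this gives $l(\omega,\Delta_1)\geq l(\omega,\Delta_\star)$ for every $\omega\in\R_{+}^d$.

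The second and main step is to deduce $\Delta_1\subset\Delta_\star$ from this inequality of support functions. I would argue by contradiction: suppose there exists $v\in \Delta_1\setminus\Delta_\star$. Since $\Delta_\star$ is closed and convex and $\{v\}$ is compact, the Hahn--Banach separation theorem yields a nonzero $\omega\in\R^d$ with $\langle\omega,v\rangle<\inf_{w\in\Delta_\star}\langle\omega,w\rangle$. The key point, which is where the Newton-polytope structure is used, is that such an $\omega$ can be taken in $\R_{+}^d$. Indeed, since $\Delta_\star$ contains a translate $a+\R_{\geq 0}^d$, any $\omega$ for which $\inf_{w\in\Delta_\star}\langle\omega,w\rangle$ is finite must have all coordinates nonnegative; this puts the separating $\omega$ in $\R_{\geq 0}^d$. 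Replacing $\omega$ by $\omega+\varepsilon(1,\dots,1)$ for small $\varepsilon>0$ and invoking continuity of $l(\cdot,\Delta_\star)$ and $l(\cdot,\Delta_1)$ (both are minima of finitely many linear functions in $\omega$, hence piecewise linear continuous) preserves the strict inequality and moves $\omega$ into $\R_{+}^d$.

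For this $\omega\in\R_{+}^d$, we have $l(\omega,\Delta_1)\leq\langle\omega,v\rangle<l(\omega,\Delta_\star)$, contradicting the inequality established after the first step. Hence no such $v$ exists and $\Delta_1\subset \Delta_\star$, i.e., $\Delta_1\geq\inf\{\Delta_2,\Delta_3\}$. The main technical obstacle is ensuring that the separating functional can be chosen in the open positive orthant rather than merely in $\R^d$; this is not serious, but it is the place where the noncompact nature of Newton polytopes (the recession cone $\R_{\geq 0}^d$) must be explicitly exploited.
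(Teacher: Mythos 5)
Your proof is correct and follows essentially the same separation-by-a-linear-functional contradiction argument as the paper. The only difference is that the paper asserts without justification that the separating form can be taken in $\R_{+}^d$, whereas your recession-cone argument (and the $\varepsilon$-perturbation into the open orthant) supplies exactly the missing detail.
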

\begin{proof}
Suppose that the inequality $\Delta_1\geq \inf\{\Delta_1,\Delta_2\}$ is false. Therefore there exists $v\in \Delta_1\setminus {\rm conv}(\Delta_2\cup\Delta_3)$ and then we can find a~linear form $L$ such that $L(v)<L(x)$ for all $x\in {\rm conv}(\Delta_2\cup \Delta_3)$. It means that $\langle \omega , v\rangle < \langle \omega, x\rangle $, $x\in {\rm conv}(\Delta_2\cup \Delta_3)$, for some $\omega \in \mathbb{R}_{+}^d$. Thus $l(\omega,\Delta_1)<l(\omega,{\rm conv}(\Delta_1\cup\Delta_2))$ and the inequality (\ref{e1}) does not hold.
\end{proof}
The above lemma implies immediately the strong triangle inequality for the~logarithmic distance of irreducible quasi-ordinary Weierstass polynomials.

Unfortunately, the strong triangle inequality does not extend to a wider class 
of irreducible Weierstrass polynomials as the following examples show. 

\begin{Example}
{\rm Let $f=Y$, $g=Y-X_1-X_2^2$, $h=Y^2-(X_1+X_2)Y+2X_1^3+X_2^3$. 
The polynomials $f$, $g$, $h$ are irreducible in $\K[[X_1,X_2]][Y]$. 
We have
$\Res(f,g)=-X_1-X_2^2$,
$\Res(f,h)=2X_1^3+X_2^3$,
$\Res(g,h)=X_1X_2^2-X_1X_2+2X_1^3+X_2^4$, hence 
there is no strong triangle inequality between 
${\rm cont}_A(f,g)$, ${\rm cont}_A(f,h)$ and ${\rm cont}_A(h,g)$ 
as illustrated in the following picture. }
\end{Example}

\begin{tikzpicture}[
     scale = 1,
     foreground/.style = { ultra thick },
     background/.style = { dashed },
     line join=round, line cap=round
   ]
   
   \draw[fill=black, opacity=0.1] (1,0)--(2.9,0)--(2.9,2.9)--(0,2.9)--(0,2)--cycle;
   \draw[foreground,->] (0,0)--+(3,0);
   \draw[foreground,->] (0,0)--+(0,3);
   \draw[thick] (1,-0.1)--+(0,0.15);
   \draw[thick] (2,-0.1)--+(0,0.15);
   \draw[thick] (-0.1,1)--+(0.15,0);
   \draw[thick] (-0.1,2)--+(0.15,0);
   \draw[thick] (1,0)--(0,2);
   \draw (1,-0.15) node[below] {$ $};
   \draw (-0.15,1) node[left] {$ $};
   \foreach \x in{} {
    \foreach \y in{}{
     \draw[fill, opacity=0.9]  (\x,\y) circle (0.5pt);
     }
    }
   \node at (1.8,1.5) {$cont_A(f,g)$}; 
  
   [
       scale = 1,
     foreground/.style = { ultra thick },
     background/.style = { dashed },
     line join=round, line cap=round
   ]
   
   \draw[fill=black, opacity=0.1] (5.5,0)--(6.9,0)--(6.9,2.9)--(4,2.9)--(4,1.5)--cycle;
   \draw[foreground,->] (4,0)--+(3,0);
   \draw[foreground,->] (4,0)--+(0,3);
   \draw[thick] (5,-0.1)--+(0,0.15);
   \draw[thick] (6,-0.1)--+(0,0.15);
   \draw[thick] (3.9,1)--+(0.15,0);
   \draw[thick] (3.9,2)--+(0.15,0);
   \draw[thick] (5.5,0)--(4,1.5);
   \draw (1,-0.15) node[below] {$ $};
   \draw (-0.15,1) node[left] {$ $};
   \foreach \x in{} {
    \foreach \y in{}{
     \draw[fill, opacity=0.9]  (\x,\y) circle (0.5pt);
     }
    }
   \node at (5.8,1.5) {$cont_A(f,h)$}; 
   
    [
       scale = 1,
     foreground/.style = { ultra thick },
     background/.style = { dashed },
     line join=round, line cap=round
   ]
   
   \draw[fill=black, opacity=0.1] (9.5,0)--(10.9,0)--(10.9,2.9)--(8,2.9)--(8,2)--(8.5,0.5)--cycle;
   \draw[foreground,->] (8,0)--+(3,0);
   \draw[foreground,->] (8,0)--+(0,3);
   \draw[thick] (9,-0.1)--+(0,0.15);
   \draw[thick] (10,-0.1)--+(0,0.15);
   \draw[thick] (7.9,1)--+(0.15,0);
   \draw[thick] (7.9,2)--+(0.15,0);
   \draw[thick] (9.5,0)--(8.5,0.5);
   \draw[thick] (8.5,0.5)--(8,2);
   \draw (1,-0.15) node[below] {$ $};
   \draw (-0.15,1) node[left] {$ $};
   \foreach \x in{} {
    \foreach \y in{}{
     \draw[fill, opacity=0.9]  (\x,\y) circle (0.5pt);
     }
    }
   \node at (9.8,1.5) {$cont_A(g,h)$};
  \end{tikzpicture}

\begin{Example}
{\rm Let $f=Y-2X_1^3$, $g=(Y-X_1)(Y-X_1^3-X_1^4)+X_2$, $h=Y-X_1^3$. 
Then $\Res(f,g)=-2X_1^7+2X_1^6+X_1^5-X_1^4+X_2$, 
$\Res(f,h)=X_1^3$, 
$\Res(g,h)=-X_1^7+X_1^5+X_2$.
The polynomials $f$, $g$, $h$ are irreducible, $fh$ is quasi-ordinary 
and the inequality ${\rm cont}_A(f,g)\geq \inf\{{\rm cont}_A(f,h),{\rm cont}_A(h,g)\}$
does not hold (see the picture below).}
\end{Example}

\begin{tikzpicture}[
     scale = 0.8,
     foreground/.style = { ultra thick },
     background/.style = { dashed },
     line join=round, line cap=round
   ]
   
   \draw[fill=black, opacity=0.1] (2,0)--(3.9,0)--(3.9,3.9)--(0,3.9)--(0,0.5)--cycle;
   \draw[foreground,->] (0,0)--+(4,0);
   \draw[foreground,->] (0,0)--+(0,4);
   \draw[thick] (1,-0.1)--+(0,0.15);
   \draw[thick] (2,-0.1)--+(0,0.15);
   \draw[thick] (3,-0.1)--+(0,0.15);
   \draw[thick] (-0.1,1)--+(0.15,0);
   \draw[thick] (-0.1,2)--+(0.15,0);
   \draw[thick] (-0.1,3)--+(0.15,0);
   \draw[thick] (2,0)--(0,0.5);
   \draw (1,-0.15) node[below] {$ $};
   \draw (-0.15,1) node[left] {$ $};
   \foreach \x in{} {
    \foreach \y in{}{
     \draw[fill, opacity=0.9]  (\x,\y) circle (0.5pt);
     }
    }
   \node at (1.8,2) {$cont_A(f,g)$}; 
  
   [
       scale = 0.8,
     foreground/.style = { ultra thick },
     background/.style = { dashed },
     line join=round, line cap=round
   ]
   
   \draw[fill=black, opacity=0.1] (8,0)--(8.9,0)--(8.9,3.9)--(8,3.9)--cycle;
   \draw[foreground,->] (5,0)--+(4,0);
   \draw[foreground,->] (5,0)--+(0,4);
   \draw[thick] (6,-0.1)--+(0,0.15);
   \draw[thick] (7,-0.1)--+(0,0.15);
   \draw[thick] (8,-0.1)--+(0,0.15);
   \draw[thick] (4.9,1)--+(0.15,0);
   \draw[thick] (4.9,2)--+(0.15,0);
   \draw[thick] (4.9,3)--+(0.15,0);
   \draw[thick] (8,0)--(8,3.9);
   \draw (1,-0.15) node[below] {$ $};
   \draw (-0.15,1) node[left] {$ $};
   \foreach \x in{} {
    \foreach \y in{}{
     \draw[fill, opacity=0.9]  (\x,\y) circle (0.5pt);
     }
    }
   \node at (6.8,2) {$cont_A(f,h)$}; 
   
    [
       scale = 0.8,
     foreground/.style = { ultra thick },
     background/.style = { dashed },
     line join=round, line cap=round
   ]
   
   \draw[fill=black, opacity=0.1] (12.5,0)--(13.9,0)--(13.9,3.9)--(10,3.9)--(10,0.5)--cycle;
   \draw[foreground,->] (10,0)--+(4,0);
   \draw[foreground,->] (10,0)--+(0,4);
   \draw[thick] (11,-0.1)--+(0,0.15);
   \draw[thick] (12,-0.1)--+(0,0.15);
    \draw[thick] (13,-0.1)--+(0,0.15);
   \draw[thick] (9.9,1)--+(0.15,0);
   \draw[thick] (9.9,2)--+(0.15,0);
   \draw[thick] (9.9,3)--+(0.15,0);
   \draw[thick] (12.5,0)--(10,0.5);
   \draw (3,-0.15) node[below] {$ $};
   \draw (-0.15,1) node[left] {$ $};
   \foreach \x in{} {
    \foreach \y in{}{
     \draw[fill, opacity=0.9]  (\x,\y) circle (0.5pt);
     }
    }
   \node at (11.8,2) {$cont_A(g,h)$};
  \end{tikzpicture}

The results of the first section can 
be reformulated in terms of the logarithmic distance.

\begin{Theorem}\label{Th:irred3}
Let $f\in \K[[X]][Y]$ be a quasi-ordinary irreducible Weierstrass polynomial 
of characteristic $(h_1,\dots,h_s)$
and let $g\in \K[[X]][Y]$ be a Weierstrass polynomial such that 
$\deg g \leq \deg f_{k+1}$ and 
$\cont_A(f,g) > \cont_A(f,f_k)$.
Then $g$ is an irreducible quasi-ordinary polynomial of characteristic $(h_1,\dots, h_{k})$
and $\deg g = \deg f_{k+1}$.
Moreover, if $\cont_A(f,g)\geq \cont_A(f,f_{k+1})$ then $\cont_A(f,g) =\cont_A(f,f_{k+1})$.
\end{Theorem}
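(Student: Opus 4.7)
The plan is to reduce Theorem~\ref{Th:irred3} to Theorem~\ref{Th:irred1} by computing the logarithmic distance $\cont_A(f,f_r)$ explicitly. The central claim is that for every $r\in\{1,\dots,s+1\}$
\[\cont_A(f,f_r)=\frac{q_r}{n}+\R_{\geq 0}^d,\]
where $n=\deg f$; equivalently, $\Res(f,f_r)=u(X)X^{(\deg f_r)q_r}$ with $u(0)\neq 0$.

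To prove this claim I would fix $\beta\in\Zer\,f_r$, write $\beta=\trunc_r(\alpha_0)$ for some $\alpha_0\in\Zer\,f$, and decompose $\alpha-\beta=(\alpha-\alpha_0)+(\alpha_0-\trunc_r(\alpha_0))$ for each $\alpha\in\Zer\,f$. The contact structure lemma together with the strong triangle inequality (applied to one-variable Puiseux series after the substitution $X=T^c$) shows that for generic $c\in\Q_{+}^d$ the contact $\cont(\Zer\,\bar f,\bar\beta)$ equals $\bar h_r$. Lemma~\ref{L:ineq} then gives $\ord f^{[c]}(\bar\beta)=\phi_c(\bar h_r)=\bar q_r$, and summing over $\beta\in\Zer\,f_r$ yields $\ord \Res(\bar f,\bar f_r)=\langle c,(\deg f_r)q_r\rangle$ for generic $c$. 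Lemma~\ref{L:comp1} then forces $\Delta(\Res(f,f_r))=(\deg f_r)q_r+\R_{\geq 0}^d$, and scaling by $\tfrac{1}{n\deg f_r}$ establishes the claim.

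With the claim in hand the translation will be immediate. Since $\Delta_1\geq\Delta_2\Leftrightarrow\Delta_1\subset\Delta_2$ in the partial order on Newton polytopes, and $\cont_A(f,g)$ is $\Delta(\Res(f,g))$ scaled by $\tfrac{1}{n\deg g}$, the hypothesis $\cont_A(f,g)>\cont_A(f,f_k)$ is equivalent to saying that every monomial of $\Res(f,g)$ has exponent strictly greater than $(\deg g)q_k$ in the partial order on $\Q_{\geq 0}^d$. Combined with $\deg g\leq n_1\cdots n_k=\deg f_{k+1}$, this matches precisely the hypothesis of Theorem~\ref{Th:irred1}, whose parts~(i) and~(ii) then yield the first assertion of the theorem.

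For the moreover part, once $g$ is known to be irreducible, the inequality $\cont_A(f,g)\geq\cont_A(f,f_{k+1})=q_{k+1}/n+\R_{\geq 0}^d$ translates to $\Delta(\Res(f,g))\subseteq (\deg g)q_{k+1}+\R_{\geq 0}^d$, that is, $X^{(\deg g)q_{k+1}}$ divides $\Res(f,g)$. Theorem~\ref{Th:irred1}(iii) then gives $\Res(f,g)=u(X)X^{(\deg g)q_{k+1}}$ with $u(0)\neq 0$, whence $\cont_A(f,g)=q_{k+1}/n+\R_{\geq 0}^d=\cont_A(f,f_{k+1})$. The one genuinely delicate step will be the opening claim: establishing genericity of $\cont(\Zer\,\bar f,\bar\beta)=\bar h_r$ depends on the fact that each root $\alpha$ of $f$ carries a term of exponent $h_r$, a standard consequence of Galois invariance and the structure of characteristic exponents; the remainder of the argument is a mechanical unwinding of definitions.
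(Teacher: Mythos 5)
The paper offers no explicit proof of Theorem~\ref{Th:irred3}; it simply declares it a reformulation of Theorem~\ref{Th:irred1}. Your proposal supplies exactly the missing reduction and is correct: the key computation $\cont_A(f,f_r)=q_r/n+\R_{\geq 0}^d$ (equivalently $\Res(f,f_r)=u(X)X^{(\deg f_r)q_r}$, $u(0)\neq 0$) follows from Lemma~\ref{L:ineq} applied to $\overline{\beta}\in\Zer\overline{f_r}$ together with Lemma~\ref{cor:delta}, and the translation between $\cont_A(f,g)>\cont_A(f,f_k)$ (in the paper's reverse-inclusion order on Newton polytopes) and ``all monomial exponents of $\Res(f,g)$ exceed $(\deg g)q_k$'' is correct, since a vertex of $\Delta(\Res(f,g))$ must be an actual exponent of $\Res(f,g)$. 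The one subtlety you flag is genuine and correctly resolved: to get $\cont(\Zer\overline{f},\overline{\beta})=\overline{h_r}$ you need each root of $f$ to carry a nonzero coefficient at $X^{h_r}$; this follows from Lemma~\ref{contact} (which produces some $\alpha$ with $d(\alpha_0,\alpha)=h_r$ and, after noting $\trunc_r(\alpha_0)=\trunc_r(\alpha)$, forces a nonzero $h_r$-term in one of them) combined with the fact from Remark~\ref{Galois} that Galois conjugation preserves supports. So your proof is correct and is, in substance, the reformulation the paper leaves to the reader.

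Two small caveats worth noting if you were to write this out in full. First, to sum $\ord f^{[c]}(\overline{\beta})$ over all $\beta\in\Zer f_r$ you must choose a single generic $c$ working simultaneously for the finitely many roots, which is fine but should be said. Second, the boundary case $k=s$ makes $f_{k+1}=f$, whence $\Res(f,f_{k+1})=0$ and $\cont_A(f,f_{k+1})$ is not defined; this is already an issue latent in the statement of Theorem~\ref{Th:irred3} as written (and in the ``Moreover'' clause of Theorem~\ref{Th:irred1}, since $q_{s+1}$ is never defined), and your proof inherits it, so it is not a defect of your argument but worth a remark restricting $k\leq s-1$ in the ``Moreover'' part.
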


\section{The Abhyankar-Moh irreducibility criterion}

 In this section we will show that our main result is a generalization of the~well-known Abhyankar-Moh irreducibility criterion (see e.g. \cite[Theorem~1.2]{am}).

At the beginning let us recall the classical Weierstrass preparation theorem for the ring $\mathbb{K}[[X,Y]]$.

\begin{Theorem}[Weierstrass]
Assume that $f=\sum_{i=0}^{\infty}a_i Y^i\in \mathbb{K}[[X,Y]]$ and there exists $m>0$ such that $a_i(0)= 0$ for $i<m$ and $a_m(0)\ne0$. Then there exist unique $u_f,f_1\in\mathbb{K}[[X,Y]]$ such that $f=u_f f_1$, $u_f(0)\ne 0$ and $f_1$ is a~Weierstrass polynomial with respect to the variable $Y$. \label{We}
\end{Theorem}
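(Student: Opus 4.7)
The plan is to deduce the preparation theorem from the Weierstrass division theorem: for every $g \in \K[[X,Y]]$ there exist unique $q \in \K[[X,Y]]$ and $r \in \K[[X]][Y]$ with $\deg_Y r < m$ such that $g = qf + r$. To prove this division theorem, I would split $f = Y^m U + P$, where $U = \sum_{i \geq 0} a_{m+i}(X) Y^i$ is a unit in $\K[[X,Y]]$ (because $U(0,0) = a_m(0) \neq 0$) and $P = \sum_{i<m} a_i(X) Y^i$ has every coefficient in the maximal ideal $\mathfrak{m} = (X_1, \ldots, X_d) \subset \K[[X]]$. For $h = \sum h_i(X) Y^i$, set $R(h) = \sum_{i<m} h_i(X) Y^i$ and $S(h) = \sum_{i \geq m} h_i(X) Y^{i-m}$, so that $h = R(h) + Y^m S(h)$. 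The equation $g = qf + r$ with $\deg_Y r < m$ then rearranges, after substituting $f = Y^m U + P$ and separating its $R$- and $S$-parts, into the coupled system $q = U^{-1}(S(g) - S(qP))$ and $r = R(g) - R(qP)$. The operator $q \mapsto U^{-1}(S(g) - S(qP))$ is contractive in the $\mathfrak{m}$-adic topology because multiplication by $P$ raises the $\mathfrak{m}$-adic order by one while $S$ and $U^{-1}$ preserve it; since $\K[[X,Y]] = \K[[X]][[Y]]$ is $\mathfrak{m}$-adically complete (each $Y$-coefficient lies in the complete ring $\K[[X]]$), Picard iteration starting at $q_0 = 0$ converges to a unique fixed point $q$, from which $r$ is recovered.

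With division in hand, apply it to $g = Y^m$ to obtain $Y^m = qf + r$ with $\deg_Y r < m$. Reducing modulo $\mathfrak{m}$ gives $Y^m = q(0,Y) \cdot Y^m V(Y) + r(0,Y)$ in $\K[[Y]]$, where $V(Y) = a_m(0) + a_{m+1}(0) Y + \cdots$ is a unit. Comparing $Y$-orders and using $\deg_Y r(0,Y) < m$ forces $r(0,Y) = 0$ and $q(0,Y) V(Y) = 1$; in particular $q(0,0) = 1/a_m(0) \neq 0$, so $q$ is a unit in $\K[[X,Y]]$, and every coefficient of $r$ lies in $\mathfrak{m}$, so $f_1 := Y^m - r$ is a Weierstrass polynomial. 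Setting $u_f := q^{-1}$ yields $f = u_f f_1$ with the required properties. Uniqueness of the pair $(u_f, f_1)$ falls straight out of the uniqueness clause in the division theorem: any competing preparation $f = u' f_1'$ with $f_1' = Y^m + r'$ rewrites as $Y^m = (u')^{-1} f + (-r')$, another Weierstrass division of $Y^m$ by $f$, so $(u')^{-1} = q$ and $-r' = r$, forcing $u' = u_f$ and $f_1' = f_1$.

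The main obstacle I expect is the division step, and specifically verifying convergence in the correct topology: one must work with the $\mathfrak{m}$-adic filtration on $\K[[X,Y]]$ rather than the larger $(X,Y)$-adic one, because the contraction ultimately stems from $P$ lying in $\mathfrak{m}\K[[X,Y]]$ and not merely in the maximal ideal of $\K[[X,Y]]$; one then needs to check that $\K[[X,Y]]$ is complete for this weaker topology, which reduces to the completeness of $\K[[X]]$ after expanding in powers of $Y$. Once this setup is established, the deduction of the preparation theorem from division is essentially bookkeeping.
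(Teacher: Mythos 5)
Your proposal is correct, and I will note up front that the paper itself gives no proof of this theorem: it is stated with the words ``let us recall the classical Weierstrass preparation theorem'' and then used as a known fact, so there is nothing in the paper to compare against. What you have written is a complete and correct proof by the standard route, namely first establishing the Weierstrass division theorem by a contraction argument and then deducing preparation by dividing $Y^m$ by~$f$.

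The details check out. The splitting $f=Y^mU+P$ with $U=\sum_{i\ge 0}a_{m+i}Y^i$ a unit and $P=\sum_{i<m}a_iY^i\in\mathfrak m\,\K[[X,Y]]$ is correct, and applying $S$ and $R$ to $g=qf+r$ does yield exactly the coupled system $q=U^{-1}\bigl(S(g)-S(qP)\bigr)$, $r=R(g)-R(qP)$, using $S(Y^m h)=h$ and $S(r)=0$. Your caution about the topology is the right one and is resolved correctly: since $\mathfrak m^k\K[[X,Y]]$ consists precisely of the series $\sum h_i(X)Y^i$ with every $h_i\in\mathfrak m^k$, the ring $\K[[X,Y]]=\K[[X]][[Y]]$ is separated and complete for the $\mathfrak m$-adic filtration (this reduces coefficientwise to completeness of $\K[[X]]$), and multiplication by $P$ raises $\mathfrak m$-adic order by one while $S$ and $U^{-1}$ preserve it, so the iteration $q\mapsto U^{-1}(S(g)-S(qP))$ is a strict contraction with a unique fixed point. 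The deduction of preparation from division to $g=Y^m$, the computation showing $q(0,0)=1/a_m(0)\ne 0$ and $r(0,Y)=0$, and the uniqueness argument by rewriting any competing factorization as another division of $Y^m$ by $f$, are all correct. This is essentially the textbook proof; an alternative classical route proves preparation directly by recursively solving for the coefficients of $f_1$ and $u_f$ degree by degree in $\mathfrak m$, but your division-theorem approach is cleaner and also yields the more general division statement, which is often useful on its own.
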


The polynomial $f_1$ from the above theorem is called the {\it Weierstrass polynomial} of $f$. If we assume additionally that $f\in \mathbb{K}[[X,Y]]$ is irreducible, then its Weierstrass polynomial is  irreducible in the ring $\mathbb{K}[[X]][Y]$ and  the  characteristic of this polynomial will be denoted by ${\rm Char}(f)$.

For $f,g\in \K[[X,Y]]$ we define the {\it intersection multiplicity number}  $i_0(f,g)$ as the~dimension of the $\K$--vector space $\K[[X,Y]]/\langle f,g\rangle$.

\begin{Theorem}[Abhyankar-Moh] Let $f$, $g\in \K[[X,Y]]$.
Assume that $f$ is irreducible, $i_0(f,X)=n<+\infty$ and ${\rm Char}(f)=\{h_1,\dots,h_s\}$. If $i_0(g,X)=n$ and $i_0(f,g)>nq_s$, then $g$ is irreducible and ${\rm Char}(g)={\rm Char}(f)$.
\end{Theorem}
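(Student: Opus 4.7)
The plan is to deduce the Abhyankar--Moh criterion from Theorem~\ref{Th:irred1} applied in the one-variable case $d=1$ with $k=s$, after replacing $f$ and $g$ by their Weierstrass polynomials. First, by Theorem~\ref{We}, I would write $f=u_f f_1$ and $g=u_g g_1$ with $f_1,g_1\in\K[[X]][Y]$ Weierstrass and $u_f,u_g$ units. The hypothesis $i_0(f,X)=n$ gives $f_1(0,Y)=Y^n$, so $\deg f_1=n$; likewise $\deg g_1=n$. Irreducibility of $f$ in $\K[[X,Y]]$ transfers to $f_1$, and hence to $\K[[X]][Y]$. Since $d=1$, every nonzero element of $\K[[X]]$ has the form $u(X)X^{q}$ with $u(0)\neq 0$, so $f_1$ is automatically quasi-ordinary, with characteristic $(h_1,\dots,h_s)={\rm Char}(f)$ by definition.

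Next, I would rewrite the intersection multiplicity hypothesis in terms of the resultant. Multiplying by units does not change the ideal, so $i_0(f,g)=i_0(f_1,g_1)$. Then the classical isomorphism
\[
\K[[X,Y]]/\langle f_1,g_1\rangle \;\cong\; \K[[X]]/\langle \Res_Y(f_1,g_1)\rangle,
\]
valid because $f_1$ is a Weierstrass polynomial, yields $i_0(f_1,g_1)=\ord_X \Res(f_1,g_1)$. Therefore the assumption $i_0(f,g)>nq_s$ becomes $\ord \Res(f_1,g_1)>(\deg g_1)q_s$; as $d=1$, the partial order on $\Q_{\geq0}^d$ is the usual order on $\Q_{\geq 0}$, so this is precisely the statement that every monomial appearing in $\Res(f_1,g_1)$ has exponent greater than $(\deg g_1)q_s$, which is exactly the hypothesis of Theorem~\ref{Th:irred1} for $k=s$.

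Finally I would invoke Theorem~\ref{Th:irred1}(i) with $k=s$: the degree bound $\deg g_1=n=n_1\cdots n_s$ is met, so $g_1$ is irreducible and quasi-ordinary of characteristic $(h_1,\dots,h_s)$. Since $g=u_g g_1$ with $u_g$ a unit, $g$ is irreducible in $\K[[X,Y]]$ and ${\rm Char}(g)={\rm Char}(g_1)={\rm Char}(f)$. The only possible stumbling point is the routine identification $i_0(f_1,g_1)=\ord\Res(f_1,g_1)$ for Weierstrass $f_1$; once this bridge is in place, all of the geometric content is already packaged inside Theorem~\ref{Th:irred1}, and no genuinely new argument is needed.
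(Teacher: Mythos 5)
Your proposal is correct and follows essentially the same route as the paper: Weierstrass preparation to pass to $f_1, g_1$, the identity $\ord\Res(f_1,g_1)=i_0(f,g)$, and Theorem~\ref{Th:irred1} with $k=s$. You simply spell out in more detail (e.g.\ the automatic quasi-ordinariness in one variable and the resultant-quotient isomorphism) what the paper's proof compresses into two lines.
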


\begin{proof}
Let $f_1$ and  $g_1$ be the Weierstass polynomials of $f$ and $g$. Then 
$\deg f_1=i_0(f,X)=n$, $\deg g_1=i_0(g,X)=n$ and ${\rm ord}\,\Res(f_1, g_1)=i_0(f,g)>nq_s$.
Since $f$ is irreducible, the polynomial $f_1\in\mathbb{K}[[X]][Y]$ is also irreducible.
Therefore Theorem \ref{Th:irred1} with $k=s$ implies 
that $g_1$ is irreducible of characteristic $(h_1,\dots,h_s)$ and the~theorem follows.
\end{proof}

\medskip
\noindent
{\small   Janusz Gwo\'zdziewicz\\
Institute of Mathematics\\
Pedagogical University of Cracow\\
Podchor\c a{\accent95 z}ych 2\\
PL-30-084 Cracow, Poland\\
e-mail: gwozdziewicz@up.krakow.pl}

\medskip
\noindent
{\small   Beata Hejmej\\
Institute of Mathematics\\
Pedagogical University of Cracow\\
Podchor\c a{\accent95 z}ych 2\\
PL-30-084 Cracow, Poland\\
e-mail: bhejmej1f@gmail.com}


\begin{thebibliography}{9}
\bibitem{Abh} S.\ S.\ Abhyankar,
{\em Expansion Techniques in Algebraic Geometry}, 
Tata Inst. Fund. Research, Bombay 1977.

\bibitem{Pl} J. Ch\c{a}dzy\'nski and A. P{\l}oski, 
{\em An Inequality for the Intersection multiplicity of Analytic Curves}, 
Bulletin of the Polish Academy of Sciences, Mathematics, Vol.\ 36, No 3-4 (1988) 113-117.

\bibitem{GP} P.\ D. Gonz\'alez P\'erez, 
{\em The semigroup of a quasi-ordinary hypersurface.} 
Journal de l'Institut de Math\'ematiques de Jussieu, 2(3), (2003) 383-399.

\bibitem{am} J.\ Gwo\'zdziewicz, and A.\ P\l oski, 
{\em On the approximate roots of polynomials},
Ann. Polon. Math. 60 (1995), no. 3, 199--210. 

\bibitem{Lipman} J.\ Lipman, 
{\em Topological invariants of quasi-ordinary singularities.}
Memoirs of the American Mathematical Society 74, no. 388 (1988): 1-107.

\bibitem{Parusinski-Rond} A.\ Parusi\'nski and G.\ Rond, 
{\em The Abhyankar-Jung Theorem}, Journal of Algebra 365 (2012) 29-41.

\bibitem{Ploski1} A.\ P\l oski, 
{\em Newton polygons and the {\L}ojasiewicz exponent 
of a holomorphic mapping of ${\mathbb C}^2$},
Ann. Polon. Math. 51 (1990) 275-281. 
\end{thebibliography}
\end{document}